\definecolor{shadecolor}{gray}{0.85}
\newcommand{\bs}{\boldsymbol}
\def\quadruples'{\mathop{\sum\!\sum\!\sum\!\sum}}
\newtheorem{thm}{Theorem}
\newtheorem{corollary}{Corollary}
\newtheorem{prop}{Proposition}
\newtheorem{defn}{Definition}
\newtheorem{lemma}{Lemma}
\begin{document}

\begin{frontmatter}
\title{Isotropic covariance functions on graphs and their edges\thanksref{T1}}
\runtitle{Covariance functions on graphs and their edges}
\thankstext{T1}{We thank Heidi S\o gaard Christensen, James Sharpnack, Adrian Baddeley, and Gopalan Nair for useful comments and illuminating discussions. We are very grateful to an anonymous referee who pointed out that the work of \cite{cambanis1983charfuns}, \cite{berens19971} and \cite{gneiting1998charfuns} can be used to prove Theorems~\ref{thm: extension for tree graphs} and \ref{thm: simplified extension for tree graphs}.}

\begin{aug}
\author{\fnms{Ethan Anderes}\thanksref{t1}\ead[label=e1]{anderes@ucdavis.edu}},
\author{\fnms{Jesper M{\o}ller}\thanksref{t3}\ead[label=e2]{jm@math.aau.dk}},
\and
\author{\fnms{Jakob G.\ Rasmussen}\thanksref{t3}\thanksref{t4}\ead[label=e3]{jgr@math.aau.dk}}

\thankstext{t1}{
Research supported in part by NSF grants DMS-1252795, DMS-1812199 and a UC Davis Chancellor's Fellowship.
}
\thankstext{t3}{Supported by
 The Danish Council for Independent Research | Natural Sciences, grant DFF – 7014-00074
"Statistics for point processes in space and beyond", and by the "Centre for Stochastic Geometry and
Advanced Bioimaging", funded by grant 8721 from the Villum Foundation.}
\thankstext{t4}{Supported by the Australian Research Council, Discovery Grant DP130102322.}
\runauthor{E. ANDERES, J. M{\O}ller AND J. G. RASMUSSEN}

\affiliation{University of California at Davis 
and Aalborg University
}

\address{E. Anderes\\
Department of Statistics\\
University of California at Davis\\
One Shields Avenue, Davis CA 95616\\
USA\\
\printead{e1}} 

\address{J. M{\o}ller\\
J. G. Rasmussen\\
Department of Mathematical Sciences\\
Aalborg University\\
Skjernvej 4A, DK-9220 Aalborg {\O}\\
Denmark\\
\printead{e2}
\\
\phantom{E-mail:\ }\printead*{e3}}

\end{aug}

\begin{abstract}
We develop parametric classes of covariance functions on linear networks and their extension to graphs with Euclidean edges, i.e., graphs with edges viewed as line segments or more general sets with a coordinate system allowing us to consider points on the graph which are vertices or points on an edge. Our covariance functions are defined on the vertices and edge points of these graphs and are isotropic in the sense that they depend only on the geodesic distance or on a new metric called the resistance metric
(which extends the classical resistance metric developed in electrical network theory on the vertices of a graph to the continuum of edge points). We discuss the advantages of using the resistance metric in comparison with the geodesic metric as well as the restrictions these metrics impose on the investigated covariance functions.  In particular, many of the commonly used isotropic covariance functions  in the spatial statistics literature (the power exponential, Mat{\'e}rn, generalized Cauchy, and Dagum classes)   are shown to be valid with respect to the resistance metric for any graph with Euclidean edges, whilst they are only valid with respect to the geodesic metric in more special cases.
\end{abstract}

\begin{keyword}[class=MSC]
\kwd[Primary ]{62M99}
\kwd[; secondary ]{62M30, 60K99.}
\end{keyword}

\begin{keyword}
\kwd{Geodesic metric}
\kwd{linear network}
\kwd{parametric classes of covariance functions}
\kwd{reproducing kernel Hilbert space}
\kwd{resistance metric}
\kwd{restricted covariance function properties.}
\end{keyword}

\end{frontmatter}

%
%

\section{Introduction}\label{s:intro}

Linear networks are used to model a wide variety of non-Euclidean spaces
occurring in applied statistical problems involving river networks, road networks, and dendrite networks, see e.g.\
\cite{cressie:frey:harch:smith:06}, \cite{cressie1997spatio}, \cite{gardner2003predicting}, \cite{hoef:peterson:theobald:06}, \cite{hoef:peterson:10}, \cite{okabe:sugihara:12}, and \cite{baddeley:rubak:turner:15}.
However, the problem of developing valid random field models over networks is a decidedly difficult task. 
Compared to what is known for Euclidean spaces -- where the results of Bochner and Schoenberg characterize the class of all stationary covariance functions, see e.g.\ \cite{yaglom1987book} -- the corresponding results for linear networks are few and far between.
Even the fundamental notion of a stationary covariance function is, at best, ambiguous for linear networks. However, the notion of an isotropic covariance function can be made precise by requiring the function to depend only on a metric defined over the linear network.
Often the easiest choice for such a metric is given by the length of the shortest path connecting two points, i.e., the geodesic metric. Still there are no general results which establish when a given function generates a valid isotropic covariance function with respect to this metric. Indeed, \cite{baddeley-et-al:17} concluded that spatial point process models on a linear network with a pair correlation function which is only depending on shortest path distance \lq\lq may be quite rare".

In this paper, we use Hilbert space embedding techniques to establish that many of the flexible isotropic covariance models used in spatial statistics are valid 
over linear networks with respect the geodesic metric and a new metric introduced in Section~\ref{d_R def}. This new metric is called the resistance metric because it extends the classical resistance metric developed in electrical network theory.
The validity of these covariance models do not hold, however, over the full parametric range available in Euclidean spaces. Moreover, we show the results for the geodesic metric apply to a much smaller class of linear networks and can not be extended to a graph that has three or more paths connecting two points on the linear network. This is in stark contrast to the resistance metric where we show there is no restriction on the type of linear network for which they apply.

We develop a generalization of a linear network  which we call a {graph with Euclidean edges}. Essentially, this is a graph $(\mathcal V, \mathcal E)$ where each edge $e\in \mathcal E$ is additionally associated to an abstract set in bijective correspondence with a line segment of $\mathbb R$. Treating the edges as abstract sets allows us to consider points on the graph that are either vertices or points on the edges, and the bijective assumption gives each edge set a (one-dimensional) Cartesian coordinate system for measuring distances between any two points on the edge (therefore the terminology Euclidean edges). The within-edge Cartesian coordinate system will be used to extend the geodesic and the resistance metric on the vertex set to the whole graph (including points on the edges). 
Our objective then is to construct parametric families of covariance functions over graphs with Euclidean edges which are isotropic with respect to the geodesic metric and the resistance metric developed below (in fact our covariance functions  will be (strictly) positive definite). 
Thereby a rich class of isotropic Gaussian random fields on the whole graph can be constructed and inferred via likelihood methods. Finally, we remark that the validity of these isotropic covariance functions also allows the construction of isotropic point process models on the whole graph constructed via a log Gaussian Cox process  \citep{moeller:syversveen:waagepetersen:98,moeller:waagepetersen:03}. We leave this and other applications of our paper for future work.


\subsection{Graphs with Euclidean edges}\label{s:DefGraphWithEuclEdges}

A linear network is typically defined as the union of a finite collection of line
segments in $\mathbb R^2$ 
with distance between two points defined as the length of the shortest path
connecting the points.  This definition, although conceptually clear, does have limitations that restrict their application. For example, in the case of road networks:
\begin{itemize}
\item Bridges and tunnels can generate networks which do not have a planar
representation as a union of line segments in $\mathbb R^2$.
\item Varying speed
limits or number of traffic lanes may require distances on line segments to be
measured differently than their spatial extent.
\end{itemize}
A graph with Euclidean edges, defined below, is a generalization of linear networks that easily
overcomes the above-mentioned limitations while still retaining the salient
feature relevant to applications: that edges (or line segments) have a Cartesian
coordinate system associated with them.

\begin{defn}\label{def.gwee}
    A triple $\mathcal G=(\mathcal V, \mathcal E, \{\varphi_e\}_{e\in\mathcal E})$ which satisfies the following conditions (a)-(d) is called a
  {\rm graph with Euclidean edges}.
    \begin{enumerate}[{\rm (a)}]
        \item\label{def.gwee1}
        {\rm Graph structure}: 
        $(\mathcal V, \mathcal E)$ is a finite simple connected graph, meaning that the vertex set $\mathcal V$ is finite,
        the graph has no repeated edges or edge which joins a vertex to itself, and every pair of vertices is connected by a path.
        \item\label{def.gwee2}
        {\rm Edge sets}: 
        Each edge $e\in \mathcal E$ is associated with a unique abstract set, also denoted $e$, where the vertex set $\mathcal V$ and all the edge sets $e\in \mathcal E$ are mutually disjoint.
        \item\label{def.gwee3}
        {\rm Edge coordinates}: 
        For each edge $e\in \mathcal E$, if $u,v \in \mathcal V$ are the vertices connected by $e$, then $\varphi_e$ is a bijection defined on $e\cup\{u, v\}$ (the union of the edge set $e$ and the vertices $\{u, v\}$) such that
        $\varphi_e$
        maps $e$ onto an open interval $(\underline e, \overline e)\subset \mathbb R$ and $\{u, v\}$ onto the endpoints $\{\underline e, \overline e\}$. 
        \item\label{def.gwee4}
        {\rm Distance consistency}: 
        Let
        $d_{\mathcal G}(u,v)\colon \mathcal V\times \mathcal V\to  [0,\infty)$ denote the standard shortest-path weighted graph metric on the vertices of $(\mathcal V, \mathcal E)$ with edge weights given by $\overline e - \underline e$ for every $e\in \mathcal E$. Then, for each $e\in \mathcal E$ connecting two vertices $u,v \in \mathcal V$, the following equality holds:
            \[ d_{\mathcal G}(u,v) = \overline e - \underline e.
            \]
    \end{enumerate}
  We write $u \in \mathcal G$ as a synonym for $u\in\mathcal V \cup \bigcup_{e\in \mathcal E}e$, the whole graph given by the union of $\mathcal V$ and all edges $e\in\mathcal E$.
\end{defn}

If we consider a linear
network $\cup_{i\in \mathcal I} \ell_i$ consisting of closed line segments $\ell_i\subset \mathbb R^2$ which intersect only at their endpoints, we can easily construct a graph with Euclidean edges as follows.
Let $\mathcal V$ be the set of endpoints of the line segments. Let each edge set $e_i\in\mathcal E$ correspond to the relative interior of the corresponding line segment $\ell_i$. Let each bijection $\varphi_{e_i}$ be given by the inverse of the path-length parameterization of $\ell_i$. Then conditions (a)-(d) are easily seen to hold.

\begin{figure} 
    \centering
    \includegraphics[height=1.5in]{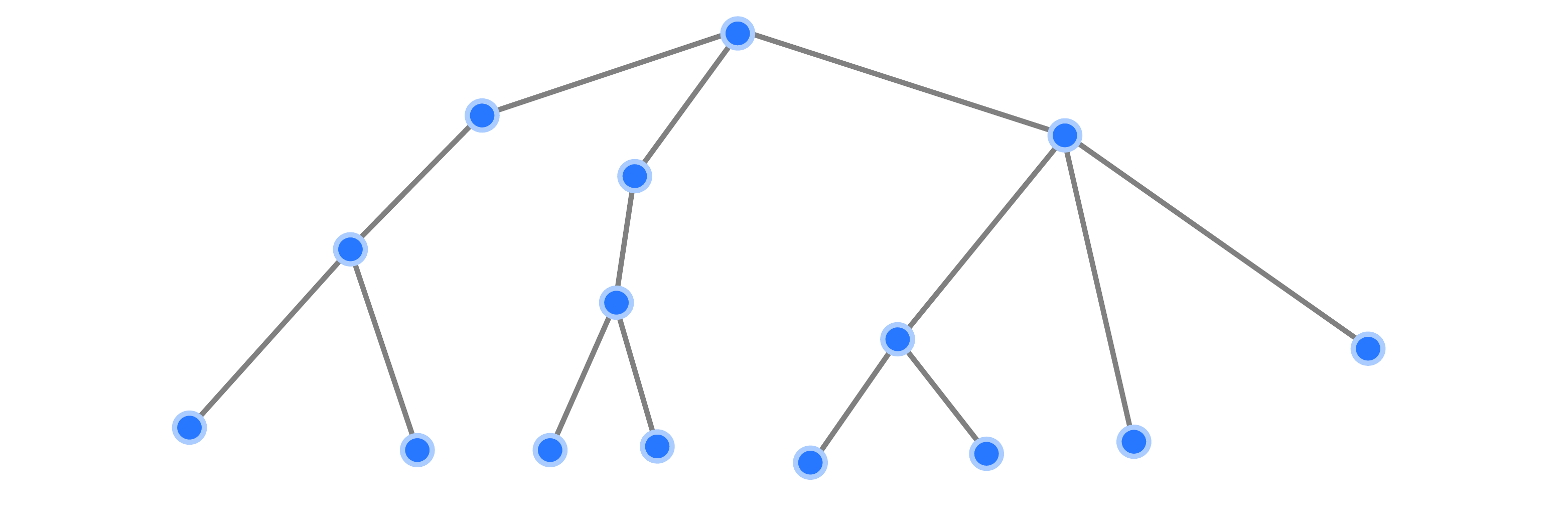}
    \caption{A Euclidean tree constructed from the linear network of grey lines.
    The blue dots represent the vertices. 
    }
    \label{tree gwee}
\end{figure}

Any triple $\mathcal G = (\mathcal V, \mathcal E, \{\varphi_e\}_{e\in \mathcal E})$ for which $(\mathcal V, \mathcal E)$ forms a tree graph is automatically a graph with Euclidean edges given that conditions \eqref{def.gwee2} and \eqref{def.gwee3} hold. In this case, $\mathcal G$ is said to be a {\it Euclidean tree}. Figure~\ref{tree gwee} shows an example.
If the graph $(\mathcal V, \mathcal E)$, associated with a graph with Euclidean edges $\mathcal G$, forms a cycle, then $\mathcal G$ is said to be a {\it Euclidean cycle}.
Conversely, if $(\mathcal V, \mathcal E)$ forms a cycle graph with edge bijections $\{\varphi_e\}_{e\in \mathcal E}$, then the resulting triple $\mathcal G:=(\mathcal V, \mathcal E, \{\varphi_e\}_{e\in \mathcal E})$ satisfies the conditions of Definition~\ref{def.gwee}  whenever there are three or more vertices (to ensure there are no multiple edges) and for every $e_o\in \mathcal E$ the following inequality is satisfied:
\begin{equation}
    \overline e_o - \underline e_o\leq \sum_{\substack{e\in\mathcal E \\ e\neq e_o}}
     (\overline e - \underline e).
\end{equation}
The above condition guarantees that no edge spans more than \textit{half} of the circumference of the cycle, implying that distance consistency holds for $\mathcal G$.
 Figure~\ref{two cycles isometrically equiv} illustrates examples of Euclidean cycles (the two first graphs) and an example of a graph violating both conditions (\ref{def.gwee1}) and (\ref{def.gwee4}) in Definition \ref{def.gwee} (the last graph).

\begin{figure} 
    \centering
    \includegraphics[height=1.5in]{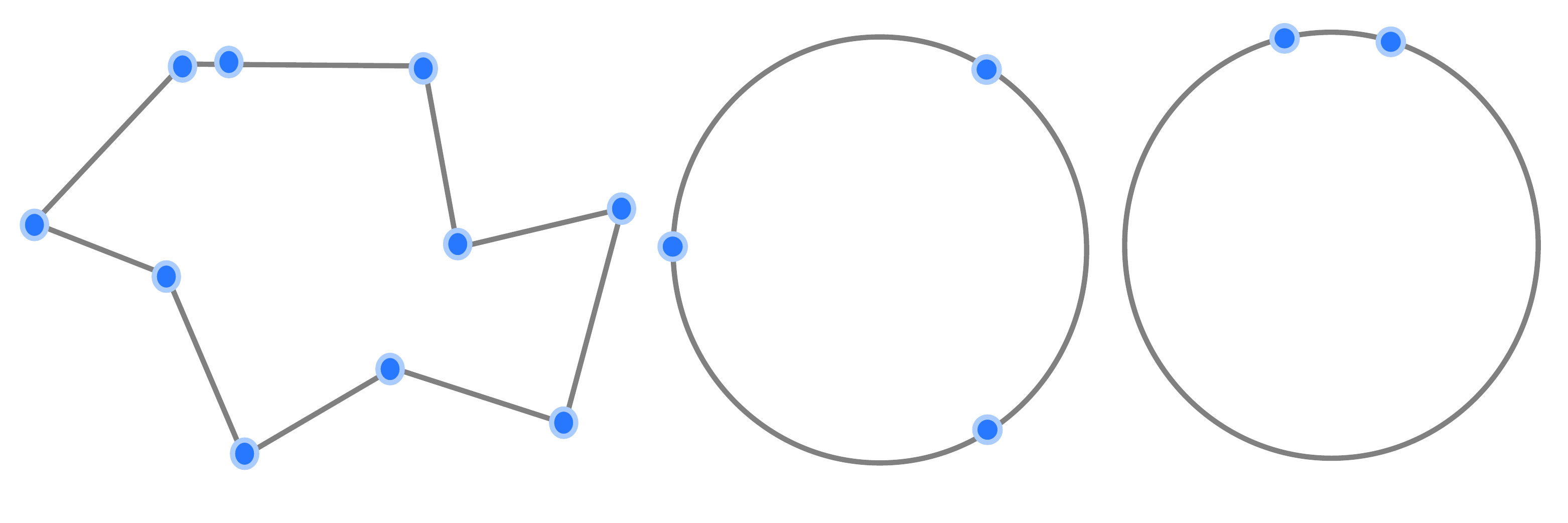}
    \caption{
	The two graphs on the left are Euclidean cycles. However, the right most graph is {\it not} 
    a graph with Euclidean edges.
	}
    \label{two cycles isometrically equiv}
\end{figure}

\begin{figure}[H]
    \centering
    \includegraphics[height=1.5in]{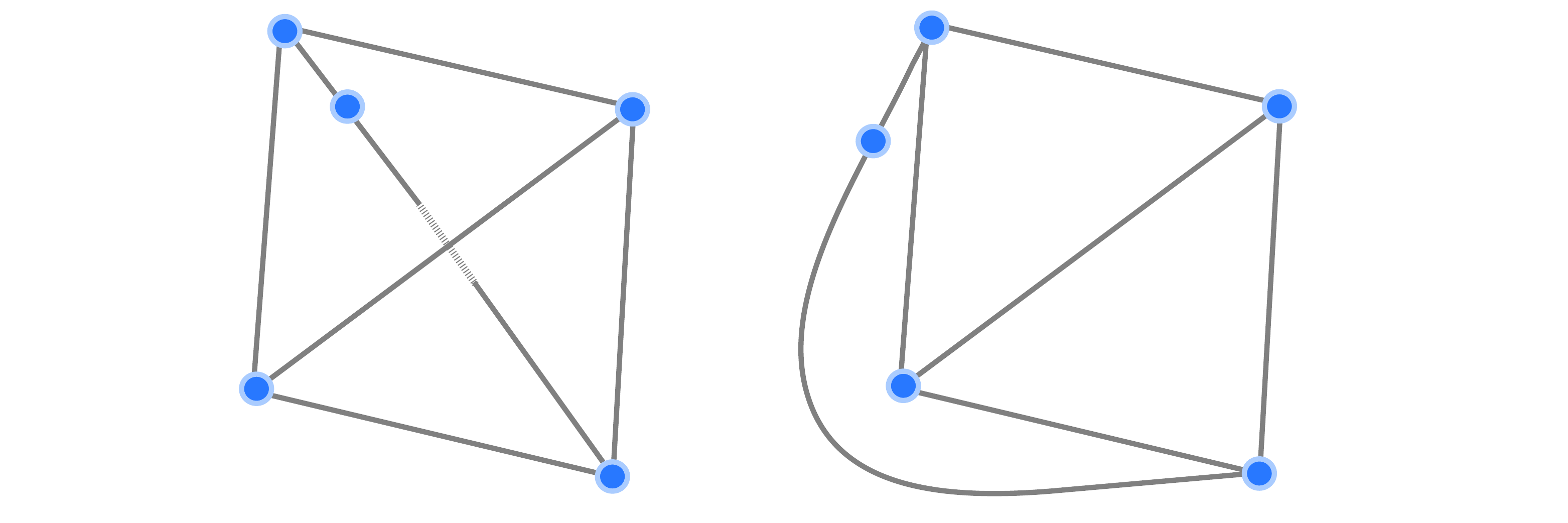}
    \caption{
	The two diagrams above show a graph with Euclidean edges $\mathcal G$ which can {\it not} be represented as a linear network in $\mathbb R^2$. The diagram on the left is drawn in a way that visually preserves edge length but forces an intersection that does not correspond to a vertex in $\mathcal G$ (the dashed segment indicates that one edge passes under the other). The diagram on the right is drawn without non-vertex intersections but requires curved segments that have length which do not correspond to the lengths determined by the edge bijections for a linear network.
	}
    \label{not a linear network}
\end{figure}

In all the above examples we have used spatial curves and line segments to represent the edges. It it worth pointing out that this is simply a visualization device. Indeed, the structure of a graph with Euclidean edges is completely invariant to the geometric shape of the visualized edges just so long as the path-length of each edge is preserved.
This concept is important when considering the example given in Figure~\ref{not a linear network}, where the edge represented by the diagonal line in the leftmost drawing represents a bridge or tunnel bypassing the other diagonal edge. Hence the lack of vertices at the intersection with that edge. Note that it is impossible to avoid this intersection when lengths of edges are fixed. This implies that this graph with Euclidean edges can not be represented as a linear network in $\mathbb R^2$.


\subsection{Summary of main results}\label{ea: summary of main results}

This section presents our main theorems
explicitly, leaving the proofs and precise definitions for later 
sections.

Our first contribution is to establish sufficient conditions for a
function $C:[0,\infty)\to \mathbb R$ to generate a  (strictly) positive definite
function of the form  $C(d(u,v))$ where $d(u,v)$ is a metric defined over the vertices and edge points of a graph with Euclidean edges $\mathcal G$; then we call $\mathcal G\times\mathcal G\ni (u,v)\to C(d(u,v))\in \mathbb R$ an {\it isotropic covariance function} and $C$ its {\it radial profile}. We study two metrics, the {\it geodesic metric}, $d_{G,\mathcal G}$, as defined in Section~\ref{d_G def}, and a new {\it resistance metric}, $d_{R,\mathcal G}$, as developed in Section \ref{d_R def},  which extends the resistance metric on the vertex set -- from electrical network theory \citep{KleinRandic:93} -- to the continuum of edge points on $\mathcal G$. As is apparent from the following two theorems, there are fundamental differences in terms of the generality of valid isotropic covariance functions when measuring distances under the two metrics.

\begin{figure}
    \centering
    \includegraphics[height=1.5in]{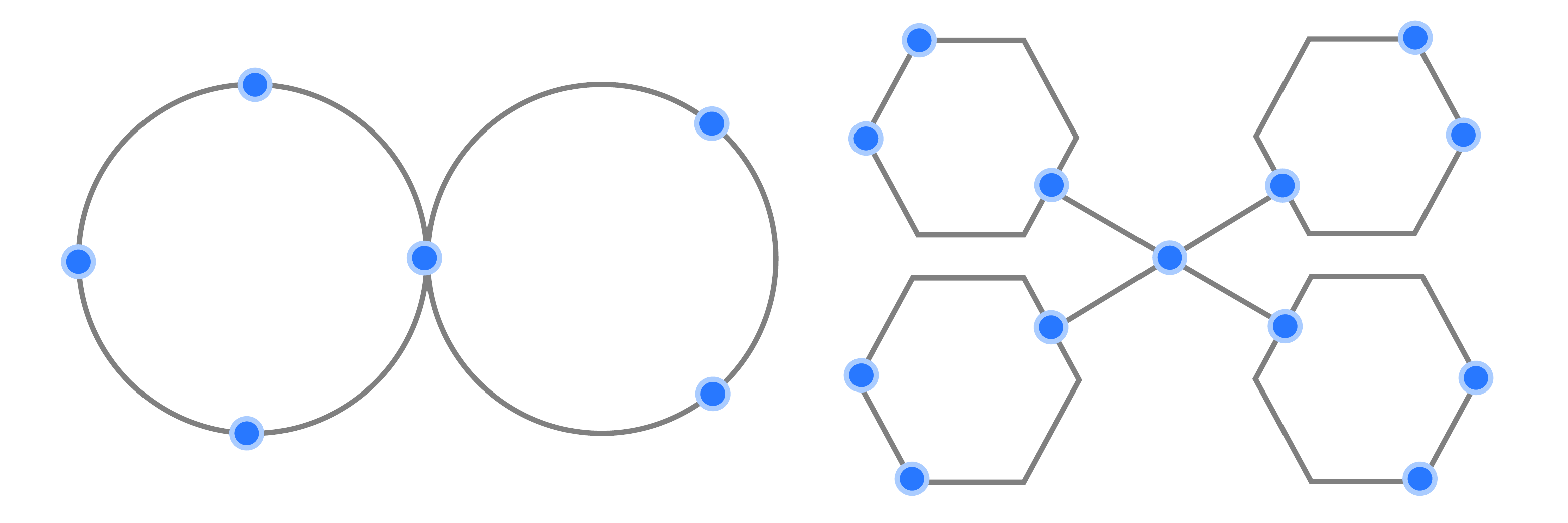}
        \caption{Examples of finite sequential 1-sums of cycles and trees. Left: A 1-sum of two Euclidean cycles. Right: A sequential 1-sum of four Euclidean cycles and one Euclidean tree.}
        \label{ea: one sum examples}
\end{figure}

In Theorem~\ref{main thm: iso cov funs} below, we consider the \textit{1-sum} of two graphs with Euclidean edges $\mathcal G_1$ and $\mathcal G_2$ having only a single point in common, $\mathcal G_1\cap\mathcal G_2=\{x_0\}$. This is defined explicitly in Section~\ref{ea: Hilbert space embedding section}, but the concept is easy to visualize as the merging of $\mathcal G_1$ and $\mathcal G_2$ at $x_0$ and the concept easily extends to the case of three or more graphs with Euclidean edges; Figure~\ref{ea: one sum examples} gives two graphical illustrations. Further, we need to recall the following definition of a completely monotonic function, noting there is a distinction, in the literature,  between  complete monotonicity on $[0,\infty)$ versus on $(0,\infty)$, the latter being fundamentally related to Bernstein functions and variograms (see \cite{berg2008stieltjes, wells1975embeddings}). 

\begin{defn}
    A function $f:[0,\infty) \to \mathbb R$ is said to be
    {\it completely monotonic on $[0,\infty)$} 
    if $f$ is continuous on $[0,\infty)$, infinitely differentiable on $(0,\infty)$ and \mbox{${(-1)}^{j}f^{(j)}(t)\geq 0$} over $ (0,\infty)$ for every integer $j\geq 0$,  where $f^{(j)}$ denotes the $j$th derivative of $f$ and $f^{(0)}=f$.
\end{defn}

\begin{thm}\label{main thm: iso cov funs}
    Let $C : [0, \infty) \rightarrow \mathbb R$ be a completely monotone and non-constant function.
    \begin{enumerate}[{\rm (i)}]
     \item\label{main thm: iso cov funs, item 1} If $\mathcal G$ is a graph with Euclidean edges then $C(d_{R,\mathcal G}(u,v))$ is (strictly) positive definite over $(u, v) \in \mathcal G \times \mathcal G$.
    \item\label{main thm: iso cov funs, item 2} If $\mathcal G$ is a graph with Euclidean edges that forms a finite sequential 1-sum of Euclidean cycles and trees then $C(d_{G,\mathcal G}(u,v))$ is (strictly) positive definite over $(u,v) \in \mathcal G \times\mathcal G$.
    \end{enumerate}
\end{thm}

\begin{table}
     \centering
 \begin{tabular}{lll}
     \textit{Type} & \textit{Parametric form} & \textit{Parameter range}
     \\\hline\\[-0.2cm]
     {Power exponential} & $C(t)=\exp(-\beta t^\alpha)$ & $0 < \alpha \leq 1$,  $\beta  > 0$. \\[0.2cm]

     {Mat\'ern}  &
     $C(t)=\frac{2^{1-\alpha}}{\Gamma(\alpha)}(\beta t)^\alpha K_{\alpha}(\beta t)$ &
     $0 < \alpha \leq \frac{1}{2}$, $\beta >0$. \\[0.2cm]

      {Generalized Cauchy}  & $C(t)=(\beta t^\alpha + 1)^{-\xi/\alpha}$ & $0 < \alpha \leq 1$, $\beta,\xi > 0$. \\[0.2cm]

     {Dagum }&
     $C(t)=\left[{\displaystyle 1-\left(\frac{\beta t^\alpha}{1+ \beta t^\alpha } \right)^{\xi/\alpha}}\right]$ & $0 < \alpha\leq 1$, $0 < \xi \leq 1$, $\beta >0$. \\[0.5cm]

     \hline
 \end{tabular}
 \caption{Parametric classes of functions $C:[0,\infty)\to\mathbb R$ which generate 
 isotropic correlation functions $C(d_{R,\mathcal G}(\cdot,\cdot))$, i.e., when distance is measured by the resistance metric and $C(0)=1$. Note: $K_{\alpha}$ denotes the modified Bessel function of the second kind and order $\alpha$. }
 \label{ea:table of acf}
 \end{table}

A consequence of Theorem \ref{main thm: iso cov funs} is that many of the parametric classes of auto-covariance functions used in spatial statistics are (strictly) positive definite with respect to $d_{R,\mathcal G}$ for general graphs and with respect to $d_{G,\mathcal G}$ for graphs which are $1$-sums of Euclidean trees and cycles. Notice, however, this holds only after restricting the parametric range to ensure complete monotonicity on $[0,\infty)$, as in the radial functions given in Table \ref{ea:table of acf}.  To see why the radial functions in Table \ref{ea:table of acf} are completely monotonic, first note that $t\to f(\beta t^\alpha + \lambda)$ is completely monotonic if $\alpha\in(0,1]$, $\beta,\lambda > 0 $, and $f$ is completely monotonic  (see Equation (1.6) in \cite{miller2001completely}). Therefore, $\exp(-\beta t^\alpha)$ and $(\beta t^\alpha +1)^{-\xi/\alpha}$ are completely monotonic for $\beta,\xi > 0$ and $\alpha\in(0,1]$, since both $\exp(-t)$ and $(t+1)^{-\xi/\alpha}$ are completely monotonic. This establishes the desired result for the {power exponential} class and the {generalized Cauchy} class in Table \ref{ea:table of acf}. The complete monotonicity for the {Mat\'ern} class in Table \ref{ea:table of acf} was proved in Example 2 of \cite{gneiting2013strictly}.
Finally, Theorem 9 in \cite{berg2008dagum} establishes that $C(t)=1-\big(t^\beta/(1+t^\beta)\big)^\gamma$ is completely monotonic whenever $\beta\gamma \in (0,1]$ and $\beta \in (0,1]$, which proves the desired result for the {Dagum} class in Table \ref{ea:table of acf}.

In the special case where $\mathcal G$ is a Euclidean tree with geodesic metric $d_{G,\mathcal G}(u,v)$, the results of Theorem \ref{main thm: iso cov funs}\eqref{main thm: iso cov funs, item 2} can be obtained from existing literature. Indeed, it is well known that the exponential covariance functions are positive definite (via $\ell_1$ embedding, using Theorem~4.1 in \cite{wells1975embeddings} and Theorem~3.2.2 in \cite{deza1996geometry}), which implies that positive mixtures of exponential
covariance functions are positive definite with respect to $d_{G,\mathcal G}(u,v)$.  
Now the results of Schoenberg (outlined in Theorem \ref{thm:charcomplmonofuns2} below) are sufficient to establish Theorem~\ref{main thm: iso cov funs}\eqref{main thm: iso cov funs, item 2} for this special case.

The contrasting generality of the range of graphs $\mathcal G$ applicable in Theorem~\ref{main thm: iso cov funs} for \eqref{main thm: iso cov funs, item 1} versus \eqref{main thm: iso cov funs, item 2}  hints at a degeneracy that occurs when modeling covariance functions which are isotropic with respect to the geodesic metric. The next result, Theorem~\ref{ea:forbidden subparagraph}, confirms this degeneracy by showing that for the geodesic metric, Theorem~\ref{main thm: iso cov funs} can not be extended to the generality given for the resistance metric. Here, for $S=R$ or $S=G$, if there exists some $\beta>0$ so that $e^{-\beta\, d_{S,\mathcal G}(u,v)}$ is not a positive semi-definite function over $(u,v)\in\mathcal G\times \mathcal G$, we say that $\mathcal G$ is a {\it forbidden graph (for the exponential class) with respect to the metric $d_{S,\mathcal G}$}. Figure~\ref{forbidden subgraph plot} shows examples in case of the geodesic metric. Note that if a forbidden graph is present as a subgraph of $\mathcal G$, then $\mathcal G$  is forbidden as well.

\begin{figure}
    \centering
    \includegraphics[height=1.5in]{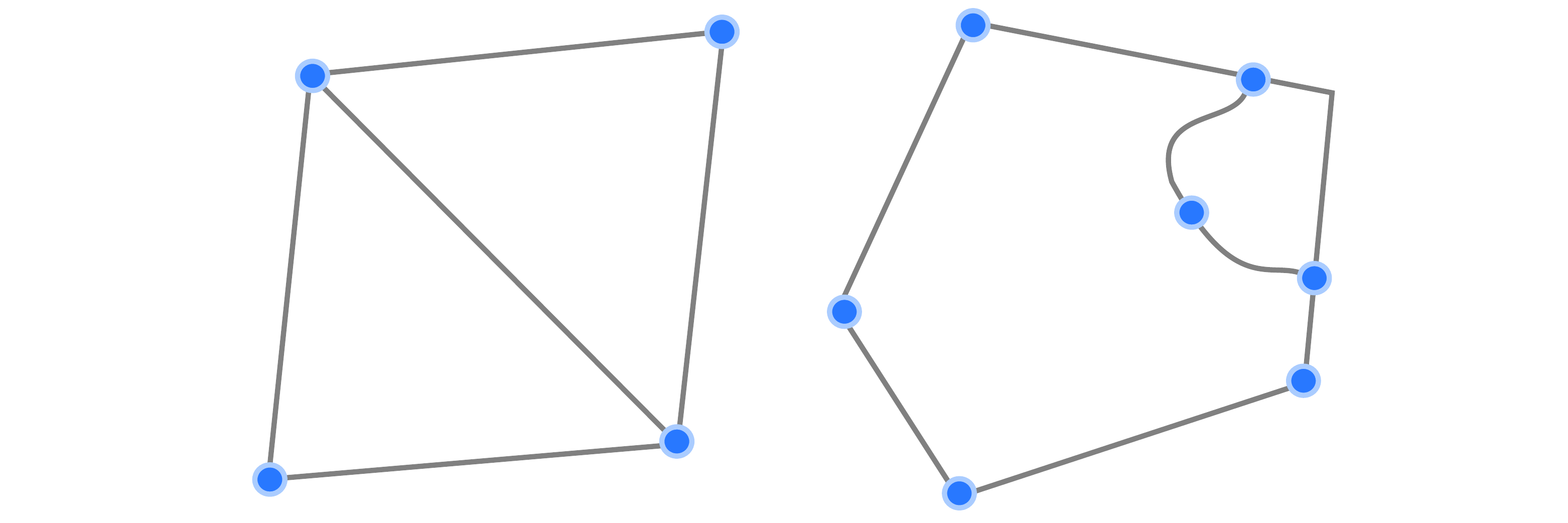}%
    \caption{Examples of forbidden graphs for the exponential class with respect to the geodesic metric.
    }\label{forbidden subgraph plot}
\end{figure}

\begin{thm}\label{ea:forbidden subparagraph}
    If  $\mathcal G$ is a graph with Euclidean edges for which there exists
    three distinct paths connecting two points $u,v\in\mathcal G$, then $\mathcal G$ is a forbidden graph for the exponential class with respect to the geodesic metric.
\end{thm}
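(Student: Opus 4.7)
By Schoenberg's theorem, $\mathcal G$ being forbidden for the exponential class with respect to the geodesic metric is equivalent to $d_{G,\mathcal G}$ failing to be of negative type: there must exist finitely many $x_1,\dots,x_n\in\mathcal G$ and $c_1,\dots,c_n\in\RR$ with $\sum_i c_i=0$ such that $\sum_{i,j}c_i c_j\,d_{G,\mathcal G}(x_i,x_j)>0$. My plan is to produce such a witness using the three distinct paths. After extracting (if necessary, via an ear-type argument) three internally disjoint paths $P_1,P_2,P_3$ from $u$ to $v$, these form a theta subgraph $H\subseteq\mathcal G$ with path lengths $\ell_1\le\ell_2\le\ell_3$, and by the subgraph-monotonicity remark immediately preceding the theorem it suffices to treat the theta $(H,d_H)$ as a stand-alone graph with Euclidean edges.

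On $H$ the canonical five-point configuration is $\{u,v,p_1,p_2,p_3\}$, where $p_i$ is the midpoint of $P_i$, with pairwise distances
\[
d_H(u,v)=\ell_1,\quad d_H(u,p_i)=d_H(v,p_i)=\ell_i/2,\quad d_H(p_i,p_j)=(\ell_i+\ell_j)/2\ \text{for}\ i\ne j.
\]
Using the coefficients $c_u=c_v=-3/2$ and $c_{p_1}=c_{p_2}=c_{p_3}=1$ (which sum to $0$), a direct expansion yields
\[
\sum_{i,j}c_i c_j\,d_H(x_i,x_j)\;=\;\tfrac{7}{2}\ell_1-\ell_2-\ell_3,
\]
which is strictly positive whenever $\ell_1>\tfrac{2}{7}(\ell_2+\ell_3)$. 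In particular, the balanced case $\ell_1=\ell_2=\ell_3=\ell$ gives $3\ell/2>0$, and the theorem follows immediately in that regime.

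The main obstacle is the asymmetric regime $\ell_1\ll\ell_2,\ell_3$, in which the shortest path behaves as a near-degenerate shortcut and the quantity $\tfrac{7}{2}\ell_1-\ell_2-\ell_3$ becomes negative. My plan to handle this case is to augment the test set with additional points placed at a common small scale $s\le\ell_1/2$ from $u$ along each of $P_2$ and $P_3$, together with mirror images at distance $s$ from $v$, producing a sub-configuration that exposes the cycle structure of $P_2\cup P_3$ rather than being washed out by the long paths. The resulting Gram-type matrix admits a block decomposition under the $u\leftrightarrow v$ symmetry of the configuration together with the $P_2\leftrightarrow P_3$ symmetry (and, in degenerate sub-cases, a further exchange involving $P_1$); in one of the resulting isotypic sectors the cycle structure forces the quadratic form to be indefinite on $\mathbf 1^\perp$. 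Extracting a coefficient vector from that sector produces the desired witness uniformly in $(\ell_1,\ell_2,\ell_3)$. Carrying out this symmetry-based analysis of the augmented configuration—and in particular verifying the sign of the relevant $2\times 2$ (or $3\times 3$) determinant in the non-trivial block for arbitrary admissible triples of path lengths—is the principal technical step of the proof.
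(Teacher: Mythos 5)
Your reduction to Schoenberg's condition (IV) and your five-point configuration are correct as far as they go: I verified that with $c_u=c_v=-\tfrac32$, $c_{p_i}=1$ one gets $\sum c_ic_j d(x_i,x_j)=\tfrac72\ell_1-\ell_2-\ell_3$, so you have a genuine witness whenever $\ell_1>\tfrac27(\ell_2+\ell_3)$. But the remaining regime is not a corner case to be cleaned up later --- it is the heart of the theorem. The generic forbidden configuration is a long cycle with a comparatively short third path (e.g.\ a cycle with a short chord, as in Figure~5 of the paper), and there $\ell_1\ll\ell_2+\ell_3$, so your midpoint configuration gives a strictly negative value and proves nothing. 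For that regime you offer only a plan: add points at scale $s$ near $u$ and $v$ on $P_2$ and $P_3$ and argue that some isotypic block of the resulting form is indefinite, with the decisive sign verification explicitly deferred. That verification is the entire difficulty; note in particular that $P_2\cup P_3$ alone is a Euclidean cycle, which by Theorem~1 is \emph{not} forbidden, so any successful witness must exploit the shortcut $P_1$ in an essential way, and it is not evident that your proposed symmetric configuration does so with the right sign uniformly in $(\ell_1,\ell_2,\ell_3)$. As it stands the proof is incomplete.

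For comparison, the paper avoids a case split entirely. After rescaling and edge splitting it places six points clustered near the two junctions (not at midpoints), parametrized by $0<t\le\tfrac12$ and $0<r\le1$, and tests positive semi-definiteness of the kernel $c(u_i,u_j)=\tfrac12\bigl(d(u_i,u_1)+d(u_j,u_1)-d(u_i,u_j)\bigr)$ (Schoenberg condition (II) for $\sqrt{d_{G,\mathcal G}}$) against the vector $(-1,-\xi,\xi,-1,1)^T$, obtaining the value $\xi(r\xi-2t)$. The free parameter $\xi$ can be taken smaller than $2t/r$ no matter how the three path lengths compare, which is exactly the uniformity your construction lacks. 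If you want to salvage your approach, you should either exhibit explicit coefficients for your augmented configuration and verify the sign for all admissible $(\ell_1,\ell_2,\ell_3)$, or build a tunable parameter into your coefficient vector as the paper does.
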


In Section \ref{d_R def}, we develop the new resistance metric $d_{R,\mathcal G}$ which is specifically designed to overcome the restrictions imposed by Theorem~\ref{ea:forbidden subparagraph} for the geodesic metric on general graphs with Euclidean edges. We construct $d_{R,\mathcal G}$ as the variogram of a canonical Gaussian random field over $\mathcal G$ obtained by linearly interpolating a random vector on the vertices constructed from the graph Laplacian then adding independent Brownian bridges over each edge. While it is known that the (discrete) effective resistance metric can be expressed as the variogram of a random vector (see \cite{Lyons:2017:PTN:3086816} for an excellent exposition) it appears that the approach given here -- namely, using the variogram of a canonical (continuous) random field to define a (continuum) resistance metric -- is new. The advantage of this construction is that it gives the following key Hilbert space embedding result.

\begin{thm}\label{ea: main embedding for dR}
    If  $\mathcal G$ is a graph with Euclidean edges, there exists a Hilbert space $H$ and an embedding $\varphi \colon \mathcal G\to H$ such that
    \begin{equation}\label{ea: display for main embedding}
    \sqrt{d_{R,\mathcal G}(u,v)} = \big\|\varphi(u) - \varphi(v) \big\|_H
    \end{equation}
    for all $u,v\in \mathcal G$ where $d_{R,\mathcal G}$ is the resistance metric developed in Section \ref{d_R def}.
    If, in addition, $\mathcal G$ forms a sequential 1-sum of a finite number of Euclidean cycles and trees, then the above result also holds for the geodesic metric $d_{G,\mathcal G}$.
\end{thm}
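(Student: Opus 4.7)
My plan is to combine two ingredients: a probabilistic representation via a centered Gaussian random field with the prescribed variogram, and a direct-sum gluing lemma that lifts embeddings across 1-sums. Throughout I will use the standard observation that, if $Z = \{Z(u)\}_{u \in X}$ is a centered Gaussian random field on a set $X$ with variogram $\mathrm{Var}(Z(u) - Z(v)) = d(u,v)$, then picking any reference point $x_0$ and setting $\varphi(u) = Z(u) - Z(x_0) \in L^2(\Omega)$ produces an isometric embedding of $(X, \sqrt{d})$ into the Hilbert space $H = L^2(\Omega)$, since $\|\varphi(u) - \varphi(v)\|_H^2 = \mathrm{Var}(Z(u) - Z(v)) = d(u,v)$. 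This reduces everything to exhibiting the right canonical Gaussian fields.

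For the resistance metric $d_{R,\mathcal G}$, the first assertion then becomes essentially definitional: the construction in Section \ref{d_R def} already exhibits a canonical Gaussian random field on $\mathcal G$ whose variogram equals $d_{R,\mathcal G}$, and the $L^2(\Omega)$ embedding follows at once. For the geodesic metric on a Euclidean tree, I would observe that there is a unique path between any two points, so $d_{G,\mathcal G} = d_{R,\mathcal G}$ on trees and the same construction applies.

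For a single Euclidean cycle of circumference $L$, parameterize by arc-length $t \in \mathbb{R}/L\mathbb{Z}$, so that $d_{G,\mathcal G}(s,t) = f(s-t)$ with $f(x) = \min(|x|, L - |x|)$. The classical Fourier expansion
\begin{equation*}
\tfrac{L}{4} - f(x) \;=\; \frac{2L}{\pi^2}\sum_{k \ge 1,\; k\text{ odd}} \frac{\cos(2\pi k x/L)}{k^2}
\end{equation*}
has nonnegative coefficients, so it defines a positive definite translation-invariant kernel on $\mathbb{R}/L\mathbb{Z}$; equivalently, $d_{G,\mathcal G}$ is conditionally negative definite on the cycle, and Kolmogorov's existence theorem then produces a centered Gaussian field on the cycle with variogram $d_{G,\mathcal G}$, yielding the embedding.

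Finally, I would glue base cases via a Hilbert direct sum. Suppose $\mathcal G = \mathcal G_1 \cup \mathcal G_2$ with $\mathcal G_1 \cap \mathcal G_2 = \{x_0\}$ and that $\varphi_i \colon \mathcal G_i \to H_i$ are embeddings for $\sqrt{d_{G,\mathcal G_i}}$ chosen so that $\varphi_i(x_0) = 0$. Define $\varphi \colon \mathcal G \to H_1 \oplus H_2$ by $\varphi(u) = (\varphi_1(u), 0)$ on $\mathcal G_1$ and $\varphi(u) = (0, \varphi_2(u))$ on $\mathcal G_2$. Since every path from $u \in \mathcal G_1$ to $v \in \mathcal G_2$ must cross $x_0$, one has $d_{G,\mathcal G}(u,v) = d_{G,\mathcal G_1}(u,x_0) + d_{G,\mathcal G_2}(x_0,v) = \|\varphi_1(u)\|^2 + \|\varphi_2(v)\|^2 = \|\varphi(u) - \varphi(v)\|^2$, and the intra-component cases are immediate. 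Iterating gives the second assertion for any finite sequential 1-sum of Euclidean cycles and trees. The hardest step, I expect, will be the cycle base case: verifying the conditional negative definiteness of $\min(|x|,L-|x|)$ carefully and, in particular, propagating the embedding from the arc-length coordinate back to all edge and vertex points of the cycle in a way that meshes with the gluing step. The resistance-metric case, by contrast, is essentially tautological once the canonical field of Section \ref{d_R def} is in hand.
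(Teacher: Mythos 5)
Your proof is correct, and its overall architecture coincides with the paper's: treat the resistance case via the fact that $d_{R,\mathcal G}$ is by construction the variogram of $Z_{\mathcal G}$, reduce Euclidean trees to that case via $d_{G,\mathcal G}=d_{R,\mathcal G}$, handle Euclidean cycles as a separate base case using the arc-length isometry with a circle, and then glue along $1$-sums. Where you genuinely diverge is in how two of these steps are discharged. First, you realize the embedding concretely as $\varphi(u)=Z(u)-Z(x_0)\in L^2(\Omega)$, whereas the paper passes abstractly through the equivalences of Theorem~\ref{Schoenberg's thm}; these are the same fact in two guises, but your version makes the Hilbert space explicit, which is what you need for the gluing step. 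Second, for the cycle you verify conditional negative definiteness of $\min(|x|,L-|x|)$ by its Fourier expansion with nonnegative coefficients (the expansion you wrote is correct, and the argument $\sum_{i,j}c_ic_jf(s_i-s_j)=-\sum_{i,j}c_ic_jK(s_i,s_j)\le 0$ under $\sum_ic_i=0$ is exactly right), whereas the paper cites \cite{gneiting2013strictly} for positive semi-definiteness of $\exp(-\beta d_{\mathbb S^1})$ and invokes the equivalence \eqref{thm itm: embedding}$\Longleftrightarrow$\eqref{thm itm: characterize exp V}; your route is more elementary and self-contained. Third, your proof of the $1$-sum step --- translate each $\varphi_i$ so $\varphi_i(x_0)=0$ and place the pieces in orthogonal summands of $H_1\oplus H_2$, so that $\|\varphi(u)-\varphi(v)\|^2=\|\varphi_1(u)\|^2+\|\varphi_2(v)\|^2=d_1(u,x_0)+d_2(x_0,v)$ --- is a clean constructive proof of Theorem~\ref{thm sum closure}, which the paper instead establishes in Appendix~\ref{a:3} by a direct manipulation of the negative-type sums following \cite{deza1996geometry}; your version is shorter and more transparent, at the cost of being specific to the embedding formulation rather than the quadratic-form formulation. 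The only points to make explicit in a final write-up are the minor normalization (the covariance kernel $\tfrac12\bigl(\tfrac{L}{4}-f\bigr)$ has variogram $f$, not $2f$) and the observation, which you flag, that the arc-length parameterization is a genuine isometry of the whole Euclidean cycle (vertices and edge points) onto $(\mathbb R/L\mathbb Z,f)$, which is guaranteed by the distance-consistency condition in Definition~\ref{def.gwee}.
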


In some sense, the construction of $d_{R,\mathcal G}$ in Section \ref{d_R def} and the proof of Theorem~\ref{ea: main embedding for dR} are the most important results of this paper. Once they are established, many of the results in this section follow almost immediately from well known consequences of Schoenberg's work in the context of embeddings, see e.g.\  \cite{wells1975embeddings} or \cite{jayasumana2013kernel}.

The next two theorems illustrate the scope of the results given above. In particular, Theorem \ref{main thm: iso cov funs}\eqref{main thm: iso cov funs, item 1} gives sufficient conditions for (strict) positive definiteness over {\em all} graphs with Euclidean edges $\mathcal G$. This does not preclude a less stringent sufficient condition that holds for a sub-collection of graphs with Euclidean edges. For example, consider the case where $\mathcal G$ has a single edge connecting two vertices. Then both $d_{R,\mathcal G}$ and $d_{G,\mathcal G}$ are equivalent to the Euclidean metric on a compact interval $[0,c]\subset \mathbb R$ and, as such, contains a much richer collection of positive definite covariance functions than those established in Theorem~\ref{main thm: iso cov funs}. A less trivial example can be obtained by restricting $\mathcal G$ to be a Euclidean tree as in the next two theorems.

\begin{thm}\label{thm: extension for tree graphs}
Let $\mathcal G$ be a Euclidean tree with $m$ leaves, where $m\geq 3$. Then $C(d_{G,\mathcal G}(u,v))$ and  $C(d_{R,\mathcal G}(u,v))$ are positive semi-definite over $(u,v)\in\mathcal G\times \mathcal G$ whenever $C\colon[0,\infty)\to \mathbb R$ is given by
\begin{equation}\label{cambanis char}
C(t) = \int_{0}^\infty \omega_{\lceil m/2 \rceil}(\sigma t) \,\mathrm d\mu(\sigma)
\end{equation}
where $\mu$ is a finite (positive) measure on $(0,\infty)$ and $\omega_n(t)$ is defined by
\begin{align*}
\omega_n(t) &= \frac{\Gamma(n/2)}{\sqrt{\pi}\,\Gamma((n-1)/2)}\int_1^\infty \Omega_n(v^{1/2}t)\,v^{-n/2}(v-1)^{(n-3)/2}\, \mathrm dv
\end{align*}
with $\Omega_n(t) = \Gamma(n/2)(2/t)^{(n-2)/2} J_{(n-2)/2}(t)$ and $J_\nu(t)$ denoting the Bessel function of the first kind and order $\nu$. 
\end{thm}

The proof of the above result, given in Section~\ref{forbidden subgraph for d_G}, follows directly by the work of \cite{cambanis1983charfuns}  once it is established that Euclidean trees with $m$ leaves can be embedded in $\mathbb R^{\lceil m/2 \rceil}$ with $\ell_1$ metric and $d_{R,\mathcal G} = d_{G,\mathcal G}$ on Euclidean trees (cf.\ Proposition~\ref{relation btwn dG and dr} below). This $\ell_1$ embedding result can also used in combination with Theorem 3.2 of \cite{gneiting1998charfuns} to give a simplified criterion for the conclusion of Theorem~\ref{thm: extension for tree graphs}.

\begin{thm}\label{thm: simplified extension for tree graphs}
Let $\mathcal G$ be a Euclidean tree with $m$ leaves, where $m\geq 3$. If $C\colon[0,\infty)\to \mathbb R$ is a continuous function such that $C^{(2\lceil m/2 \rceil-2)}$ is convex and $\lim_{t\rightarrow \infty}C(t) = 0$,  then both $C(d_{G,\mathcal G}(u,v))$ and  $C(d_{R,\mathcal G}(u,v))$ are positive semi-definite over $(u,v)\in\mathcal G\times \mathcal G$. 
\end{thm}

Finally, we notice that Theorem~\ref{thm: extension for tree graphs} shows that covariance functions on Euclidean trees may attain negative values, and at the very end of Section~\ref{sec.forbidden} we give an example of a parametric family of covariance functions whose support can be made arbitrary small.


\subsection{Outline for the remainder of the paper}

Details of the geodesic metric and resistance metric over graphs with Euclidean edges, along with their theoretical properties used in subsequent sections, are given in Section~\ref{ea: metric development}. The resistance metric $d_{R,\mathcal G}$ is defined constructively as the variogram of a certain random field over $\mathcal G$, analogous to a Wiener process on $\mathbb R$. This construction has the advantage that it establishes the Hilbert space embedding result almost immediately (utilizing a theorem of Schoenberg). The difficulty, however, is in showing that $d_{R,\mathcal G}$ is indeed an extension of the classical effective resistance on any finite subgraph and is invariant to the graph operations of splitting and merging edges (cf.\ Proposition~\ref{splitting and merging edges} below). The invariance result is important since it implies the resistance metric is, in some sense, intrinsic to the minimal graph structure of $\mathcal G$: the addition or removal of unnecessary vertices along an edge leaves $d_{R,\mathcal G}$ unchanged.  Proofs of these theoretical properties rely heavily on Hilbert space methods and are deferred to the Appendix.

Sections \ref{ea: Hilbert space embedding section} and \ref{forbidden subgraph for d_G} contain the proofs of all the results summarized in Section~\ref{ea: summary of main results} and follow relatively easily given the results in Section~\ref{ea: metric development}. The Hilbert space embedding stated in Theorem~\ref{ea: main embedding for dR} is proved first in Section~\ref{ea: Hilbert space embedding section}. The remaining four theorems summarized in Section~\ref{ea: summary of main results} are proved in Section~\ref{forbidden subgraph for d_G}. Finally, in Section \ref{sec.forbidden} we establish constraints -- depending on the graph structure of $\mathcal G$ -- for features of any radial profile which generates an isotropic covariance function with respect to either metric, resistance or geodesic.

%
%

\section{The geodesic and resistance metric on $\mathcal G$}\label{ea: metric development}

In this section we develop the geodesic and resistance metric over graphs with Euclidean edges. The geodesic metric, developed in Section~\ref{d_G def}, is easily constructed once a concrete notion of a path is defined. The resistance metric, in contrast, requires decidedly more work and is developed in Section~\ref{d_R def}.


\subsection{Notation and terminology}\label{s:not-term}

Let $\mathcal G=(\mathcal V, \mathcal E, \{\varphi_e\}_{e\in\mathcal E})$ be a  graph with Euclidean edges. To stress the dependence on $\mathcal G$, write $\mathcal V(\mathcal G) \equiv \mathcal V$ and $\mathcal E(\mathcal G)\equiv\mathcal E$. If $u,v \in \mathcal V(\mathcal G)$ are connected by an edge in $\mathcal E(\mathcal G)$, we say they are {\it neighbours} 
and write $u\sim v$.
If $u \in e\in \mathcal E(\mathcal G)$, we let $\underline u,\overline u$ denote the neighbouring vertices which are connected by edge $e$ and ordered so that $\underline u$ corresponds to $\underline{e}$ and $\overline{u}$ corresponds to $\overline{e}$. When $u\in \mathcal V(\mathcal G)$, we define $\overline{u}=\underline{u}=u$. The distinction between $\underline{u}, \overline{u}$ and $\underline e, \overline{e}$ can be seen by noting that  $\underline e,\overline{e}\in \mathbb R$ but $\underline{u},\overline{u}\in \mathcal V(\mathcal G)$.

Let $e\in\mathcal E(\mathcal G)$ and $I\subseteq(\underline e, \overline e)$ be a non-empty interval. Then $\varphi_e^{-1}(I)$ is called a {\it partial edge}, 
its
 two {\it boundaries} 
 correspond to the two-point set
$\varphi_e^{-1}(\overline I \setminus I^o)$, where $\overline I$ is the closure of $I$ and $I^o$ is the open interior of $I$, and its
{\it length} 
is given by the Euclidean length of $I$. Thus the edge $e$ is also a partial edge and its length is denoted $\text{len}(e)$.

Two partial edges are called {\it incident} 
if they share a common boundary in $\mathcal G$.
 A {\it path} 
 connecting two distinct points $u, v\in \mathcal G$ is denoted $p_{uv}$ and given by an alternating sequence $u_1, e_1, u_2, e_2,\ldots, u_n, e_n, u_{n+1}$,
where  $u_1,\ldots,u_{n+1}\in\mathcal G$ are pairwise distinct, $u_1 = u$, $u_{n+1}=v$, and  $e_1, e_2, \ldots, e_n$ are non-overlapping partial edges such that each $e_i$ has boundary $\{u_i, u_{i+1}\}$.
Moreover, the {\it length of $p_{uv}$} 
is denoted  $\text{len}(p_{uv})$ and defined as the sum of the lengths of $e_1, e_2, \ldots, e_n$.


\subsection{Geodesic metric
}\label{d_G def}

For a graph with Euclidean edges $\mathcal G$, the {\it geodesic distance} 
is defined for all $u,v\in\mathcal G$ by
\begin{align}
    \label{geodesic construction}
    d_{G,\mathcal G}(u,v) = \inf\{ \text{len}(p_{uv})
    \}
\end{align}
where the infimum is over all paths connecting $u$ and $v$.
Using the consistency requirement given in Definition \ref{def.gwee}\eqref{def.gwee4}, the following proposition is easily verified.

\begin{prop}\label{thm: basic properties of geodesic metric}
    If $\mathcal G$ is a graph with Euclidean edges, then $d_{G,\mathcal G}(u,v)$
    is a metric over $u,v\in \mathcal G$ satisfying the following.
    \begin{itemize}
        \item\label{geodesic metric definition i} Restricting $d_{G,\mathcal G}$ to $\mathcal V(\mathcal G)$ results in the standard weighted shortest-path graph metric with edge weights given by $\text{len}(e)$.
        \item\label{geodesic metric definition ii} $d_{G,\mathcal G}$ is an extension of the Euclidean metric on each edge $e\in \mathcal E(\mathcal G)$ induced by the bijection $\varphi_e$. That is, $d_{G,\mathcal G}(u,v)= |\varphi_e(u) - \varphi_e(v)|$ whenever  $u,v\in e\in \mathcal E(\mathcal G)$.
    \end{itemize}
\end{prop}


\subsection{Resistance metric
}\label{d_R def}

The resistance metric typically refers to a distance derived from electrical network theory  on the vertices of a finite or countable graph with each edge representing a resistor with a given conductance, see e.g.\ \cite{JorgensenPearse:10} and the references therein. By definition, the resistance between two vertices $u$ and $v$ is the voltage drop when a current of one ampere flows from $u$ to $v$. For a graph with Euclidean edges $\mathcal G$, there are two reasons why it is natural to consider an extension of the resistance metric, defined on just the vertices and edge conductance given by inverse edge length, to the continuum of edge points and vertices of $\mathcal G$. The first reason is purely mathematical: The resulting metric solves the degeneracy problem found in Theorem~\ref{ea:forbidden subparagraph}. Second,
resistance  may be a natural metric for applications associated with flow and travel time across street networks: For example, the total inverse resistance of resistors in parallel is equal to the sum of their individual inverse resistances; correspondingly, multiple pathways engender better flow.

In developing this extension, we take a somewhat non-standard approach and define a metric over $\mathcal G$ with the use of an auxiliary random field $Z_{\mathcal G}$ with index set $\mathcal G$. The resulting metric 
is then \textit{defined} to be the variogram of $Z_{\mathcal G}$:
\begin{equation}\label{e:drGuvdef}
    d_{R,\mathcal G}(u,v) := \text{var}(Z_{\mathcal G}(u) - Z_{\mathcal G}(v)),\qquad u,v\in\mathcal G.
\end{equation}
Propositions~\ref{classical effective resistance} and \ref{splitting and merging edges} below show that $d_{R,\mathcal G}$ does in fact give the natural extension of the electrical network resistance metric: $d_{R,\mathcal G}$ evaluated on any additional edge points will result in the same metric that would be obtained on the resulting discrete electrical network.

Before presenting the formal construction of $Z_{\mathcal G}$ and our results, we give a brief outline.
The form of $Z_{\mathcal G}$ will be defined as a finite sum of independent zero-mean Gaussian random fields:
\begin{equation}\label{ea: def of Z_G}
    Z_{\mathcal G}(u):=Z_\mu(u) +\!\sum_{e\in\mathcal E(\mathcal G)}\!Z_e(u),\qquad u\in\mathcal G.
\end{equation}
The field $Z_\mu$ is characterized by a multivariate Gaussian vector $(Z_{\mu}(v);v\in\mathcal V(\mathcal G))$ whose covariance matrix is related to the so-called graph Laplacian in electrical network theory; this vector is linearly interpolated across the edges so that $Z_\mu(u)$ is defined for all points $u\in\mathcal G$. For each $e\in \mathcal E$, the random field  $Z_e$ is only defined to be non-zero on edge $e$ and  $Z_e(u)=B_e(\varphi_e(u))$ if $u\in e$ or $u$ is a boundary point of $e$, where $B_e$ is an independent Brownian bridge defined over $[\underline e,\overline e]$. Although the construction of $Z_{\mathcal G}$ appears ad-hoc, we will show that the variogram of the resulting random field $Z_{\mathcal G}$ results in the continuum extension of the resistance metric found in electrical network theory.

%

\subsubsection{Construction of  $Z_{\mu}$}

The random field $Z_\mu$ is constructed via analogy to electrical network theory and using the following ingredients. We view each edge in $\mathcal G$ as a resistor with conductance function  $c:\mathcal V(\mathcal G)\times \mathcal V(\mathcal G)\to[0,\infty)$ given by
\begin{align}
    c(u,v) &= \begin{cases}
    1/d_{G,\mathcal G}(u,v) &\text{if $u\sim v$}, \\
    0 &\text{otherwise.} \\
    \end{cases} \label{conductance1}
\end{align}
Let  $\mathbb R^{\mathcal V(\mathcal G)}$ denote the vector space of real functions $h$ defined on $\mathcal V(\mathcal G)$; when convenient
we view $h$ as a vector indexed by $\mathcal V(\mathcal G)$.
Also let $u_o\in \mathcal V(\mathcal G)$ be an arbitrarily chosen vertex called the origin; this is only introduced for technical reasons as explained below.
Define $L:\mathcal V(\mathcal G)\times \mathcal V(\mathcal G)\to\mathbb R$ as the function/matrix with coordinates
\begin{equation}\label{def of L in main txt}
    L(u,v)=\left\{
    \begin{array}{llll}
        1+c(u_o) & \mbox{if }u=v=u_o,\\
        c(u) & \mbox{if }u=v\not=u_o,\\
        -c(u,v) &\mbox{otherwise,}
    \end{array}
    \right.
\end{equation}
where $c(u) :=\sum_{v}c(u,v)=\sum_{v\sim u}c(u,v)$ corresponds to the sum of the conductances associated to the edges incident to vertex $u$.
Obviously, $L$ is symmetric and
a simple calculation shows that for $z,w\in\mathbb R^{\mathcal V(\mathcal G)}$,
\begin{equation}\label{ea:zTLw}
    z^TLw=z(u_o)w(u_o)
    \,+\! \frac{1}{2}\sum_{u\sim v}(z(u)-z(v)) c(u,v) (w(u)-w(v)),
\end{equation}
so $z^T L z = 0$ if and only if $z(u)=0$ for all $u\in \mathcal
V(\mathcal G)$.  Thus $L$ is (strictly) positive definite 
with (strictly) positive definite matrix inverse $L^{-1}$. Notice that the matrix $L$ is similar to what would
be called the \lq\lq Laplacian matrix"  from electrical network theory, see e.g.\ \cite{Kigami:03} and \cite{JorgensenPearse:10}, except that $L$ has the
additional $1$ added at $(u_o, u_o)$. The role of the origin $u_o$ is to make
$L$ (strictly) positive definite, but the resistance metric will be
shown to be invariant to this choice and have the correct form
(see Proposition~\ref{classical effective resistance} below).

Now, the random field $Z_\mu$ is simply defined by linearly
interpolating a collection of Gaussian random variables associated with
the vertices $\mathcal V(\mathcal G)$: Let $v_1, v_2,
\ldots, v_n$ denote the vertices in $\mathcal V(\mathcal G)$
and define $Z_\mu$ at these vertices by
\begin{equation}
    (Z_\mu(v_1), \ldots, Z_\mu(v_n))^T \sim \mathcal N(0,L^{-1}).
\end{equation}
To define the value of $Z_\mu(u)$ at any point $u\in\mathcal G$ we interpolate across
each edge as follows
\begin{equation}
    Z_\mu(u)= (1-d(u))Z_\mu(\underline u) + d(u) Z_\mu(\overline u)
\end{equation}
where  $d(u)$ denotes the distance of $u$ from $\underline u$ as a proportion of the length of the edge containing $u$, formally given by
\begin{equation}\label{ea: definition of d(u)}
    d(u)= \begin{cases}
        d_{G,\mathcal G}(u, \underline u)/d_{G,\mathcal G}(\underline u, \overline u) & \text{if $u\not\in \mathcal V(\mathcal G)$}, \\
        0 & \text{otherwise.}
    \end{cases}
\end{equation}
Notice that the covariance function $R_\mu(u,v):=  \text{cov}(Z_\mu(u), Z_\mu(v))$  can be computed explicitly: For any $u,v\in\mathcal G$,
\begin{align}
    R_\mu(u,v)
    &= d(u)d(v)L^{-1}(\overline u,\overline v) +  [1-d(u)][1-d(v)]L^{-1}(\underline u,\underline v) \nonumber\\
    &\qquad + d(u)[1-d(v)]L^{-1}(\overline u,\underline v) + [1-d(u)]d(v)L^{-1}(\underline u,\overline v). \label{ea:R_mu in main txt}
\end{align}

%

\subsubsection{Construction of  $Z_{e}$} 

The definition of $Z_\mu$ in the previous section used explicitly an analogy to electrical network theory.  So it should come as no surprise that the variogram of $Z_\mu$ gives something related to the resistance metric. However, this will only be true at the vertices. What we want is the electrical network property to hold for all points of $\mathcal G$ without the necessity of re-computing the  matrix $L$ for additional edge points. By simply adding Brownian bridge fluctuations over each edge, this turns out to give the right amount of variability.

To formally define a Brownian bridge process over each edge $e\in\mathcal E(\mathcal G)$, we use the edge bijection $\varphi_e$ which identify points on $e$ with points in the interval $(\underline e,\overline
  e)\subset \mathbb R$.  For all $e\in\mathcal E(\mathcal G)$, let
$B_e$ denote mutually independent Brownian bridges which are independent of $Z_{\mu}$, where $B_e$  is defined on $[\underline
  e,\overline e]$ so that $B_e(\underline e) = B_e(\overline e) = 0$.
  For any $u\in\mathcal G$, we define
\begin{equation}
Z_e(u)= \begin{cases}
    B_e(\varphi_e(u)) & \text{ if $u\in e$}, \\
    0 & \text{ otherwise}.
\end{cases}
\end{equation}
Letting $R_e(u,v)=  \text{cov}(Z_e(u), Z_e(v))$, we have for any $u,v\in \mathcal G$,
\begin{equation}\label{ea:R_e given in main txt}
    R_e(u,v)=\begin{cases}
    \bigl[d(u)\wedge d(v)-d(u)d(v)\bigr]d_{G,\mathcal G}(\underline u,\overline u) & \text{ if $u,v\in e$},\\
    0 & \text{ otherwise.}
    \end{cases}
\end{equation}
Note that the covariance function $R_{\mathcal G}$ for the random field $Z_{\mathcal G}$, defined in \eqref{ea: def of Z_G}, satisfies
\begin{equation}\label{ea:cov fun of Z_G}
    R_{\mathcal G}(u,v)=R_\mu(u,v)+\sum_{e\in\mathcal E(\mathcal G)}R_e(u,v),\qquad u,v\in\mathcal G.
\end{equation}

%

\subsubsection{Properties of $d_{G,\mathcal G}$ and $d_{R,\mathcal G}$}
\label{props of d_G and d_R}

The following Proposition~\ref{classical effective resistance} shows that $d_{R,\mathcal G}$ is indeed the extension of the classical effective resistance on electrical networks and is invariant to the choice of origin $u_o$ (used in the construction of $L$ in \eqref{def of L in main txt}). Further, Proposition~\ref{splitting and merging edges} shows that $d_{R, \mathcal G}$ is invariant to the addition of vertices and removal of vertices with degree two. Finally, Proposition~\ref{relation btwn dG and dr} characterizes $d_{R,\mathcal G}$ via an associated infinite dimensional reproducing kernel Hilbert space. The proofs of the propositions are given in Appendix~\ref{a:2}.

\begin{prop}
\label{classical effective resistance}
For a graph with Euclidean edges $\mathcal G$,
     $d_{R,\mathcal G}$ is a metric, it is invariant to the choice of origin $u_o$, and it simplifies to the classic (effective) resistance metric over the vertices when $\mathcal G$ is considered to be an electrical network with nodes $\mathcal V(\mathcal G)$, resistors given by the edges $e\in \mathcal E(\mathcal G)$, and conductances given by $1/\text{len}(e)$ for $e\in \mathcal E(\mathcal G)$.
\end{prop}

An important property of the geodesic metric on graphs with Euclidean edges is that distances are, in some sense, invariant to the replacement of an edge by two new edges merging at a new degree 2 vertex. This is illustrated in Figure \ref{two cycles isometrically equiv} where it is clear that geodesics are the same for the left-most graph and the middle graph (when the edge lengths are scaled so the circumferences are equal) regardless of the fact that the left-most graph has more vertices and edges.

Perhaps surprisingly, this important property also holds for $d_{R,\mathcal G}$. To state the result, we need to be precise about what it means to add a vertex on an edge and correspondingly remove a degree 2 vertex (merging the corresponding incident edges). The operations will be generically referred to as splitting and merging: For $u\in e\in\mathcal E(\mathcal G)$, define the partial edges $\underline u u=\{\varphi_e^{-1}(t):\underline e<t<\varphi_e(u)\}$ and
$u\overline u =\{\varphi_e^{-1}(t):\varphi_e(u)<t<\overline e\}$, and
partition  $e=\{\underline u u\}\cup \{u\} \cup\{u\overline u \}$. Then
the operation of
{\it splitting an edge $e\in\mathcal E(\mathcal G)$ at $u\in e$} results in a new graph $\mathcal G_{\text{split}}$ with Euclidean edges which is obtained by
adding $u$ to $\mathcal V(\mathcal G)$ and replacing $e\in \mathcal E(\mathcal G)$ with new edges $\underline u u$ and $u\overline u$.
The operation of {\it merging two edges $e_1, e_2\in\mathcal E(\mathcal G)$  which are incident to a degree two vertex $v\in\mathcal V(\mathcal G)$} results in a new graph with Euclidean edges $\mathcal G_{\text{merge}}$ simply obtained by removing $v$ from $\mathcal V(\mathcal G)$ and replacing $e_1, e_2\in \mathcal E(\mathcal G)$ with the single merged edge given by $e_1\cup\{v\}\cup e_2$.

Clearly, $\mathcal G$, $\mathcal G_{\text{merge}}$ and $\mathcal G_{\text{split}}$ are equal as point sets. It is also clear that the geodesic metric is
{\it invariant to splitting edges and merging edges at degree two vertices}
in the sense that
\[
d_{G,\mathcal G}(u,v) = d_{G,\mathcal G^\prime}(u,v)
\]
for all $u,v\in\mathcal G$ whenever $\mathcal G^\prime$ is obtained from $\mathcal G$ by a finite sequence of edge splitting operations and edge merging operations which meet at a degree two vertex. The following theorem shows this property also holds for the resistance metric.

\begin{prop}\label{splitting and merging edges}
For a graph with Euclidean edges $\mathcal G$, the resistance and geodesic metrics $d_{R,\mathcal G}$ and $d_{G,\mathcal G}$ are invariant to splitting edges and merging edges at degree two vertices (so long as the resulting graph satisfies the conditions of Definition \ref{def.gwee}).
\end{prop}

Propositions~\ref{classical effective resistance} and \ref{splitting and merging edges}  show that $d_{R, \mathcal G}$ is the appropriate extension of the classic resistance metric over finite nodes of an electrical network to the continuum of edge points over a graph with Euclidean edges. The next proposition, also analogous to results from electrical network theory, illustrates how multiple pathways between points of $\mathcal G$ leads to a reduction of $d_{R,\mathcal G}$ compared with $d_{\mathcal G,\mathcal G}$.  

\begin{prop}
\label{relation btwn dG and dr}
For any graph with Euclidean edges $\mathcal G$, we have 
\begin{equation}
\label{eq: dr <= dg}
    d_{R,\mathcal G}(u,v) \leq d_{G,\mathcal G}(u,v),\,\qquad\text{$u,v\in\mathcal G$,}
\end{equation}
with equality if and only if $\mathcal G$ is a Euclidean tree. 
If $\mathcal G$ is a Euclidean cycle with circumference $\omega = \sum_{e\in \mathcal E(\mathcal G)}\text{len}(e)$ then
\begin{equation}
\label{eq: form of d_R when G is a cycle}
d_{R,\mathcal G}(u,v) = d_{G,\mathcal G}(u,v) - \frac{{d_{G,\mathcal G}(u,v)^2}}{\omega},\,\qquad\text{$u,v\in\mathcal G$.}
\end{equation}
\end{prop}

The fact that $d_{R,\mathcal G}(u,v) = d_{G,\mathcal G}(u,v)$ if and only if $\mathcal G$ is a Euclidean tree suggests that $d_{R,\mathcal G}$ can be viewed not only as an extension of the vertex (effective) resistance as established in Proposition~\ref{classical effective resistance} but also as an extension of $d_{G,\mathcal G}$ on trees to general graphs. Instead of extending the shortest path property of the geodesic metric, the resistance metric extends the validity of the covariance models given in Theorem~\ref{main thm: iso cov funs}: from $d_{G,\mathcal G}$ on trees to $d_{R,\mathcal G}$ on general graphs (noting that Theorem~\ref{ea:forbidden subparagraph} implies both properties can not be simultaneously extended to general graphs). 

Notice that the quadratic term in \eqref{eq: form of d_R when G is a cycle}, for the Euclidean cycles, explicitly quantifies how multiple paths leads to a reduction in (effective) resistance. Moreover, \eqref{eq: form of d_R when G is a cycle} can be re-arranged, using the fact that $\omega = \text{len}(p_{uv}) + \text{len}(\widetilde p_{uv}) $ where $p_{uv}$ denotes the shortest path from $u$ to $v$ and $\widetilde p_{uv}$ denoting the longer path connecting $u$ to $v$, to obtain
\[
d_{R,\mathcal G}(u,v) = \Big(\,{ \text{len}(p_{uv})^{-1} + \text{len}(\widetilde p_{uv})^{-1}} \Big)^{-1}.
\]
In particular, $d_{R,\mathcal G}(u,v)$ is a function of both path lengths, strictly smaller than each, combined through what is called {\it parallel reduction} in electrical network theory.

We remark that \eqref{eq: form of d_R when G is a cycle} allows us to write any covariance model $C(d_{R, \mathcal G}(u,v))$ on a Euclidean cycle $\mathcal G$ in terms of the geodesic metric $d_{G,\mathcal G}$. Combined with Theorem~\ref{main thm: iso cov funs} we conclude that $C(t - t^2 / \omega)$
is strictly positive definite on the circle of radius $\omega / (2\pi)$ for every non-constant completely monotonic function $C:[0,\infty)\to \mathbb R$. These results can be compared with the literature on isotropic auto-covariance models on the circle.  
For example, in the case $C(t) = \exp(-\beta t)$ the positive definiteness of the auto-covariance $\exp(-\beta (t - t^2 / (2\pi)))$ with respect to the geodesic distance on $\mathbb S^1$ agrees with the conclusions of P\'olya's theorem on the circle (see e.g.\ Theorem~4 in \cite{gneiting1998b}).


Our final result on the resistance metric, although stated last and verified in Appendix~\ref{a:1}, gives the above three propositions as near corollaries and does so by characterizing the reproducing kernel Hilbert space of functions over $\mathcal G$ which is associated to the Gaussian random field $Z_\mathcal G$ (see e.g.\ \cite{wahba1990}). To state the result we need some notation for functions defined over $\mathcal G$. For $f:\mathcal G\to \mathbb R$ and $e\in\mathcal E(\mathcal G)$, we let $f_e:[\underline e, \overline{e}]\to \mathbb R$ denote the restriction of $f$ to $e$, interpreted as a function of the interval $[\underline e, \overline{e}]$. If $f_e$ has a derivative Lebesgue almost everywhere, we denote this by $f_e^{\,\prime}$; recall that the existence of $f_e^{\,\prime}$ is equivalent to absolute continuity of $f_e$.

 \begin{defn}\label{ea:diriclet form def}
     For a graph with Euclidean edges $\mathcal G$ and an arbitrarily chosen origin $u_o\in\mathcal V(\mathcal G)$, let
     $\mathcal F$ be the class of   functions $f:\mathcal G\to \mathbb R$ which are continuous with respect to $d_{G,\mathcal G}$ and for all $e\in \mathcal E(\mathcal G)$, $f_e$ is absolutely continuous and $f_e^\prime \in L^2([\underline e, \overline e])$.
     In addition,
     define the following quadratic form on $\mathcal F$:
     \begin{align}\label{def of inner product}
         \langle f, g\rangle_{\mathcal F} &:= f(u_o)g(u_o) \,+\, \sum_{e\in \mathcal E(\mathcal G)}\, \int_{\underline e}^{\overline e} f_e^{\,\prime} (t) g_e^{\,\prime}(t)\,\mathrm dt.
     \end{align}
 \end{defn}

 \begin{prop}
 \label{the RKHS associated with Z_G}
    Let $\mathcal G$ be a graph with Euclidean edges with origin $u_o\in\mathcal V(\mathcal G)$. Then the space $(\mathcal F, \langle \cdot, \cdot \rangle_{\mathcal F})$ is an infinite dimensional Hilbert space with reproducing kernel $R_{\mathcal G}(u,v)$, given in \eqref{ea:cov fun of Z_G}, and resistance metric $d_{R,\mathcal G}(u,v)$, given in \eqref{e:drGuvdef}, satisfying
     \begin{align}
         d_{R,\mathcal G}(u,v)
          &= \sup_{f\in \mathcal F} \big\{(f(u) - f(v))^2:  \| f \|_{\mathcal F}\leq 1   \big\}, \label{the RKHS inner product associated with Z_G} \\
         R_{\mathcal G}(u,v) &= 1 + \big\{ d_{R,\mathcal G}(u,u_o) + d_{R,\mathcal G}(v,u_o) - d_{R,\mathcal G}(u,v) \big\}/2, \label{dR to R}
     \end{align}
     for all $u,v \in \mathcal G$.
 \end{prop}

Notice that \eqref{dR to R} illustrates how the additional $1$, added to $c(u_o)$ in \eqref{def of L in main txt}, translates to the dependence of $R_{\mathcal G}(u,v)$ on $u_o$. This was done to ensure the invertibility of $L$ but the effect of which is canceled in the variogram of $Z_{\mathcal G}$. Moreover, \eqref{dR to R} also illustrates the connection with classic Brownian motion on $\mathbb R$. For example, if $\mathcal G$ has vertices $0$ and $1$ connected by a single edge $e=(0,1)$, $\varphi_e$ is the identity, and $u_0=0$, then Propositions~\ref{relation btwn dG and dr} and \ref{the RKHS associated with Z_G} shows that $R_{\mathcal G}(u,v)=1+(|u|+|v|-|u-v|)/2$ which, up to a overall constant is precisely the covariance function of Brownian motion.

%
%

\section{Hilbert space embedding of $d_{G,\mathcal G}$ and $d_{R,\mathcal G}$}\label{ea: Hilbert space embedding section}

This section proves the key Hilbert space embedding result given in Theorem~\ref{ea: main embedding for dR}. For this we first need to recall a theorem by \cite{schoenberg1935remarks,schoenberg1938metric} on relating Hilbert spaces and positive definite functions and establish a new theorem on embedding $1$-sums of distance spaces. The exposition of both of these results are kept as general as possible, since they hold for arbitrary distance spaces.

Recall that $(X,d)$ is called a {\it distance space} if $d(x,y)$ for $x,y\in X$ is a
{\it distance}
on $X$, i.e., $d$ satisfies all the requirements of a metric with the possible exception of the triangle inequality.  Let $ \mathrm{Range}(X,d)$ $=\{d(x,y):x,y\in X\}$.

\begin{defn}
    Let $(X, d)$ be a distance space and $g:\mathrm{Range}(X,d)\to[0,\infty)$ a function. Then $(X, d)$ is said to have a
    {\rm $g$-embedding into a Hilbert space} 
    $(H, \| \cdot \|_H )$, denoted
    $(X,d)\overset g\hookrightarrow H$, if 
    there exists a map $\varphi\colon X\to H$ which satisfies \[
    g(d(x,y)) = \|\varphi(x) - \varphi(y)  \|_H
    \]
    for all $x,y\in X$. The special case when $g$ is the identity map is denoted $(X,d)\overset{id}\hookrightarrow H$.
\end{defn}

The following fundamental theorem shows the connection between Hilbert space embeddings and positive semi-definite functions; it follows from \cite{schoenberg1935remarks,schoenberg1938metric}. This turns out to be an extremely useful tool, both for constructing positive semi-definite functions and for proving the existence of Hilbert space embeddings.

\begin{thm}
\label{Schoenberg's thm}
    Let $(X,d)$ be a distance space and $x_0$ an arbitrary member
    of $X$. The following statements are equivalent.
    \begin{enumerate}[{\rm (I)}]
        \item\label{thm itm: embedding} $(X,d)\overset{id}\hookrightarrow H$ for some Hilbert space $H$.
        \item\label{thm itm: characterize p V} $d(x, x_0)^2 + d(y, x_0)^2 - d(x, y)^2$ is positive semi-definite over $x,y \in X$.
        \item\label{thm itm: characterize exp V} For every $\beta >0$,
          the function $\exp(-\beta d(x,y)^2)$ is positive
          semi-definite over $x,y \in X$.
        \item\label{thm itm: negtave type} The inequality
          $\sum_{k,j=1}^n c_k c_j d(x_k,x_j)^2\leq 0$ holds for every
          $x_1,\ldots, x_n\in X$ and $c_1,\ldots, c_n\in\mathbb R$ for
          which $\sum_{k=1}^n c_k = 0$.
    \end{enumerate}
\end{thm}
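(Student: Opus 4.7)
The plan is to establish the cycle of implications $(\text{I})\Rightarrow(\text{III})\Rightarrow(\text{IV})\Rightarrow(\text{II})\Rightarrow(\text{I})$. Two classical facts underlie everything: Gaussian kernels are positive semi-definite on any real Hilbert space, and every positive semi-definite (PSD) kernel on a set can be realized as the inner-product kernel of some Hilbert space embedding via the reproducing kernel construction.

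For (I)$\Rightarrow$(III), assume an isometric embedding $\varphi\colon X\to H$ and rewrite
\[
    \exp(-\beta d(x,y)^2) = a(x)\,a(y)\exp\bigl(2\beta\langle\varphi(x),\varphi(y)\rangle_H\bigr),\qquad a(x):=\exp(-\beta\|\varphi(x)\|_H^2).
\]
Power-expanding the last exponential yields a sum with positive coefficients of kernels $\langle\varphi(x),\varphi(y)\rangle_H^k$, each of which is PSD by iterated application of the Schur product theorem; multiplication by the rank-one PSD kernel $a(x)a(y)$ preserves the property. For (III)$\Rightarrow$(IV), fix $c_1,\ldots,c_n$ with $\sum_k c_k=0$, so that $\sum_{k,j}c_kc_j=0$. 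Then for every $\beta>0$,
\[
    \frac{1}{\beta}\sum_{k,j} c_kc_j\bigl[\exp(-\beta d(x_k,x_j)^2)-1\bigr] \;\geq\; 0,
\]
and letting $\beta\to 0^+$ term by term (the sum is finite) yields $\sum_{k,j} c_kc_j d(x_k,x_j)^2\leq 0$.

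For (IV)$\Rightarrow$(II), given arbitrary reals $c_1,\ldots,c_n$ and points $x_1,\ldots,x_n\in X$, augment the system by adjoining the distinguished point $x_0$ with weight $c_0:=-\sum_{k=1}^n c_k$, so that the full vector of weights sums to zero. Applying (IV) to the augmented system and expanding using $d(x_0,x_0)=0$, the cross terms involving $c_0$ combine with the remaining sum to give $\sum_{k,j=1}^n c_kc_j[d(x_k,x_0)^2+d(x_j,x_0)^2-d(x_k,x_j)^2]\geq 0$, which is exactly (II). For (II)$\Rightarrow$(I), set $K(x,y):=\tfrac{1}{2}[d(x,x_0)^2+d(y,x_0)^2-d(x,y)^2]$, which is PSD by (II) and satisfies $K(x,x)=d(x,x_0)^2$ since $d(x_0,x_0)=0$. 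Build $H$ by taking the linear span of $\{K(x,\cdot):x\in X\}$, defining $\langle K(x,\cdot),K(y,\cdot)\rangle_H:=K(x,y)$, extending by bilinearity, quotienting by the null space, and completing. Then $\varphi(x):=K(x,\cdot)$ satisfies
\[
    \|\varphi(x)-\varphi(y)\|_H^2 = K(x,x)+K(y,y)-2K(x,y) = d(x,y)^2,
\]
which is the required isometric embedding.

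The main technical step is the reproducing kernel Hilbert space construction in (II)$\Rightarrow$(I): one must verify that the bilinear form descends to the quotient modulo the null space and that the abstract metric completion remains a Hilbert space in which the kernel is still recovered as the inner product. This is a standard piece of functional analysis that I would cite rather than reproduce from scratch. All other implications are algebraic manipulations, with only (III)$\Rightarrow$(IV) requiring an elementary limit argument.
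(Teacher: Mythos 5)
Your proof is correct. Note that the paper itself does not prove this theorem at all---it states it as a known result and attributes it to Schoenberg's 1935 and 1938 papers---so there is no in-paper argument to compare against; what you have written is essentially the classical Schoenberg argument, and every step checks out. Specifically: the identity $\exp(-\beta d(x,y)^2)=a(x)a(y)\exp(2\beta\langle\varphi(x),\varphi(y)\rangle_H)$ is right, and the Schur-product/power-series argument correctly delivers (I)$\Rightarrow$(III); the limit $\beta\to 0^+$ in (III)$\Rightarrow$(IV) is legitimate because the sum is finite and $\sum_{k,j}c_kc_j=0$ kills the constant term; the augmentation by $x_0$ with weight $c_0=-\sum_k c_k$ in (IV)$\Rightarrow$(II) expands exactly as you claim (the harmless factor $\tfrac12$ you introduce in $K$ does not affect positive semi-definiteness); and the Moore--Aronszajn construction in (II)$\Rightarrow$(I) gives $\|\varphi(x)-\varphi(y)\|_H^2=K(x,x)+K(y,y)-2K(x,y)=d(x,y)^2$ as required. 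Citing the standard RKHS completion rather than reproving it is entirely appropriate, as the paper does the same for the whole theorem.
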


It is common (in~\cite{wells1975embeddings}
for example) to call any distance space $(X,d)$ which satisfies
condition (\ref{thm itm: negtave type}) a
{\it distance of negative type}. 
In the geostatistical literature, however, if $d$ satisfies
(\ref{thm itm: negtave type}), then $d^2$ is said to be a
{\it generalized covariance function of order $0$}. 
In
particular, for any random field $Z$, condition (\ref{thm itm: negtave
  type}) is a necessary property of the variogram ${d(u,v)}^2=
\text{var}(Z(u) - Z(v))$.

The last concept needed to show Theorem \ref{ea: main embedding for dR} deals with the notion of the $1$-sum of two distance spaces \citep{deza1996geometry}. This operation  allows us to construct new distance spaces (which are root-embeddable) by stitching  multiple root-embeddable distance spaces together.

\begin{defn}\label{1sum def}
    Suppose $(X_1, d_1)$ and $(X_2, d_2)$ are two distance spaces such that $X_1\cap X_2 = \{ x_0 \}$. Then the
    {\rm $1$-sum} 
    of  $(X_1, d_1)$ and $(X_2, d_2)$ is the distance space $(X_1\cup X_2, d)$ defined by
    \begin{equation}\label{1sum def eqn for d}
    d(x,y)=
    \begin{cases}
        d_1(x,y) & \text{if $x,y \in X_1$}, \\
        d_2(x,y) & \text{if $x,y \in X_2$}, \\
        d_1(x,x_0) + d_2(x_0,y) & \text{if $x \in X_1$ and $y \in X_2$}.
    \end{cases}
    \end{equation}
\end{defn}


The key fact about $1$-sums, for our use, is summarized in the following theorem. We omit the proof and simply note that it can be found in~\cite{deza1996geometry} (Proposition 7.6.1) with slightly different nomenclature. 


\begin{thm}\label{thm sum closure}
    Suppose $(X_1, d_1)$ and $(X_2, d_2)$ are two distance spaces such that $X_1\cap X_2 = \{ x_0 \}$. If $(X_1, d_1)\overset{\text{\tiny$\sqrt{}$}}\hookrightarrow H_1$ and
    $(X_2, d_2)\overset{\text{\tiny$\sqrt{}$}}\hookrightarrow H_2$
    for two Hilbert spaces $H_1$ and $H_2$,  then there exists a Hilbert space $H$ such that $(X_1\cup X_2, d)\overset{\text{\tiny$\sqrt{}$}}\hookrightarrow H$ where $(X_1\cup X_2,d)$ is the
    {\rm $1$-sum} 
    of $(X_1, d_1)$ and  $(X_2, d_2)$.
\end{thm}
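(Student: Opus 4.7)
The plan is to take the Hilbert space direct sum $H := H_1 \oplus H_2$ and define an embedding that sends $X_1$ into the first summand and $X_2$ into the second, after first translating the two given embeddings so that the common point $x_0$ is sent to the origin in each space.

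The first step is a normalization. Since the $\sqrt{}$-embedding condition only involves differences $\varphi_i(x) - \varphi_i(y)$, I can replace $\varphi_i$ by $\tilde\varphi_i(x) := \varphi_i(x) - \varphi_i(x_0)$ without changing the property $\sqrt{d_i(x,y)} = \|\tilde\varphi_i(x) - \tilde\varphi_i(y)\|_{H_i}$. Thus, without loss of generality, assume $\varphi_1(x_0) = 0_{H_1}$ and $\varphi_2(x_0) = 0_{H_2}$.

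Next, equip $H = H_1 \oplus H_2$ with the usual inner product $\langle (a,b),(a',b')\rangle_H := \langle a,a'\rangle_{H_1} + \langle b,b'\rangle_{H_2}$, and define $\varphi: X_1\cup X_2 \to H$ by
\[
\varphi(x) := \begin{cases} (\varphi_1(x),\, 0_{H_2}) & \text{if } x \in X_1, \\ (0_{H_1},\, \varphi_2(x)) & \text{if } x \in X_2. \end{cases}
\]
The two formulas agree at $x = x_0$ (both yield $(0_{H_1}, 0_{H_2})$), so $\varphi$ is well-defined on $X_1\cup X_2$.

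The final step is to verify $\|\varphi(x)-\varphi(y)\|_H^2 = d(x,y)$ in each of the three cases in Definition~\ref{1sum def}. For $x,y\in X_i$ ($i=1,2$) this is immediate from the corresponding $\sqrt{}$-embedding of $(X_i,d_i)$. For $x\in X_1$ and $y\in X_2$, orthogonality of the two summands gives
\[
\|\varphi(x)-\varphi(y)\|_H^2 = \|\varphi_1(x)\|_{H_1}^2 + \|\varphi_2(y)\|_{H_2}^2 = d_1(x,x_0) + d_2(x_0,y) = d(x,y),
\]
where the middle equality uses $\varphi_1(x_0)=0$ and $\varphi_2(x_0)=0$ together with the two $\sqrt{}$-embeddings. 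There is no real obstacle to overcome; the only conceptual point is the initial translation, which is what allows the cross-term case to reduce to the triangle-path length $d_1(x,x_0)+d_2(x_0,y)$ via the Pythagorean identity in the orthogonal direct sum.
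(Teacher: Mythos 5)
Your proof is correct, and it takes a genuinely different route from the paper's. The paper (following Proposition 7.6.1 of Deza and Laurent) works on the quadratic-form side: it invokes the equivalence between root-embeddability and the negative-type inequality (items (I) and (IV) of Theorem~\ref{Schoenberg's thm}), splits the coefficients $c_i$ into modified families $c_i^{(1)}, c_i^{(2)}$ supported on $X_1$ and $X_2$ with the mass at $x_0$ adjusted so each family sums to zero, and verifies the inequality for the $1$-sum distance by an algebraic telescoping computation. You instead construct the embedding explicitly: translate each $\varphi_i$ so $x_0\mapsto 0$, map into the orthogonal direct sum $H_1\oplus H_2$, and check the three cases of Definition~\ref{1sum def}, with the cross case handled by the Pythagorean identity
\[
\|\varphi(x)-\varphi(y)\|_H^2=\|\varphi_1(x)\|_{H_1}^2+\|\varphi_2(y)\|_{H_2}^2=d_1(x,x_0)+d_2(x_0,y).
\]
All steps are sound, including well-definedness at $x_0$ (both formulas give the origin) and the reduction of the within-$X_i$ cases to the hypotheses. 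Your argument is more constructive and arguably more transparent---it exhibits $H$ and $\varphi$ concretely rather than deducing existence through Schoenberg's equivalence---while the paper's computation has the modest advantage of staying entirely within the negative-type formalism that the rest of Section~\ref{ea: Hilbert space embedding section} already uses.
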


We are now ready to prove Theorem \ref{ea: main embedding for dR}, from Section \ref{ea: summary of main results}, on the Hilbert space embedding of $d_{R,\mathcal G}$ and $d_{G,\mathcal G}$.

\begin{proof}[\small{PROOF OF THEOREM \ref{ea: main embedding for dR}}]  

Suppose $\mathcal G$ is a graph with Euclidean edges.
By 
\eqref{e:drGuvdef}, we trivially have
\[
    d(u,v)^2  = \text{var}(Z_{\mathcal G}(u) - Z_{\mathcal G}(v))
\]
where $d(u,v):=\sqrt{d_{R,\mathcal G}(u,v)}$. The fact that $d(u,v)^2$ is a variogram implies that condition \eqref{thm itm: negtave type} holds (from Schoenberg's result stated in Theorem~\ref{Schoenberg's thm}). Since \eqref{thm itm: embedding} $\Longleftrightarrow$ \eqref{thm itm: negtave type} we have that $(\mathcal G, d)\overset{id}\hookrightarrow H$ for some Hilbert space $H$, and hence
$(\mathcal G, d_{R,\mathcal G})\overset{\text{\tiny $\sqrt{}$}}\hookrightarrow H$ as was to be shown.

For the geodesic metric, first assume $\mathcal G$ forms a tree graph.
In this case, Proposition~\ref{relation btwn dG and dr} implies $d_{G,\mathcal G} = d_{R,\mathcal G}$ and therefore $(\mathcal G,  d_{G,\mathcal G})\overset{\text{\tiny $\sqrt{}$}}\hookrightarrow H$ by the corresponding result for $d_{R,\mathcal G}$.
Second, assume $\mathcal G$ forms a cycle graph (such as the left two graphs of Figure~\ref{two cycles isometrically equiv}). For some constant $\lambda>0$, there clearly exists a metric isometry between $(\mathcal G,  \lambda d_{G,\mathcal G})$ and the unit circle $\mathbb S^1$ equipped with the great circle metric $d_{\mathbb S^1}$.
Since $\exp(-\beta d_{\mathbb S^1}(x,y))$ is positive semi-definite over ${\mathbb
  S^1}\times {\mathbb S^1}$ for all $\beta>0$ (see \cite{gneiting2013strictly}, for example), the function $\exp(-\beta d_{G,\mathcal G}(u,v))$ is positive semi-definite over $\mathcal G\times\mathcal G$ for all $\beta>0$. Now, setting $d:=\sqrt{d_{G,\mathcal G}}$, the equivalence \eqref{thm itm: embedding} $\Longleftrightarrow$ \eqref{thm itm: characterize exp V} in Theorem \ref{Schoenberg's thm} implies  $(\mathcal G, d)\overset{id}\hookrightarrow H$, hence $(\mathcal G, d_{G,\mathcal G})\overset{\text{\tiny $\sqrt{}$}}\hookrightarrow H$ for some Hilbert space $H$. Finally, for the general result where $\mathcal G$ is a $1$-sum of cycles and trees, we simply use Theorem \ref{thm sum closure} to conclude that $(\mathcal G, d_{G,\mathcal G})\overset{\text{\tiny $\sqrt{}$}}\hookrightarrow H$ for some Hilbert space $H$.
\end{proof}

%
%

\section{Isotropic covariance functions with respect to $d_{G,\mathcal G}$ and $d_{R,\mathcal G}$}\label{forbidden subgraph for d_G}

In this section we prove all the results stated in Section~\ref{ea: summary of main results}, with the exception of Theorem \ref{ea: main embedding for dR} proved in the previous section. In some sense, many of the proofs of these results follow easily from Theorem \ref{ea: main embedding for dR} and the seminal work of Schoenberg and von Neumann~\citep{schoenberg1938metric, schoenberg1938monotone, von1941fourier} connecting metric embeddings, Hilbert spaces and completely monotonic functions (see also \cite{gneiting2013strictly}) but we review the necessary results for completeness. The following result characterizes completely monotonic functions on $[0,\infty)$ as positive mixtures of scaled exponentials (see Theorems $2$, $3$, and $3^\prime$ in \cite{schoenberg1938monotone}).

\begin{thm}\label{thm:charcomplmonofuns2}
    The completely monotonic functions on $[0,\infty)$ are precisely those which admit a representation $f(t) = \int_{0}^\infty e^{-t\sigma}\,\mathrm d\mu(\sigma)$, where $\mu$ is a finite positive measure on $[0,\infty)$. Moreover, if $H$ is a Hilbert space and $f$ is a non-constant completely monotonic function on $[0,\infty)$ then $f(\|x-y\|^2_H)$  is (strictly) positive definite over $(x,y)\in H\times H$. 
\end{thm}

\begin{corollary}\label{functional form of the auto-covariance functions we construct in this paper}
If $C:[0,\infty)\to\mathbb R$ is a non-constant and completely monotonic function and $(X,d)$ is a distance space which satisfies
    \[
    (X,d)\overset{\text{\tiny$\sqrt{}$}}\hookrightarrow H,
    \]
    where  $H$ is a Hilbert space, then $C(d(x,y))$ is positive semi-definite over $(x,y)\in X\times X$. If, in addition, $d(x,y) = 0 \Leftrightarrow x=y$ for all $x,y\in X$, then $C(d(x,y))$ is (strictly) positive definite over $(x,y)\in X\times X$.
\end{corollary}

Corollary \ref{functional form of the auto-covariance functions we construct in this paper} follows easily from Theorem \ref{thm:charcomplmonofuns2}, since if $(X,d)\overset{\text{\tiny$\sqrt{}$}}\hookrightarrow H$ for some Hilbert space $H$, then there exists a map $\varphi\colon X \to H$ for which $d(x,y) = \|\varphi(x) - \varphi(y)  \|_H^2$ for all $x,y\in X$.  Then for a non-constant and completely monotonic function $C$ we have $C(d(x,y)) =  C(\|\varphi(x) - \varphi(y)  \|_H^2)$ which is positive semi-definite, via Theorem \ref{thm:charcomplmonofuns2}. If, in addition, $d(x,y) = 0 \Leftrightarrow x=y$ for all $x,y\in X$, then  $\varphi$ maps one-to-one onto its range which implies that $C(d(x,y)) =  C(\|\varphi(x) - \varphi(y)  \|_H^2)$ is strictly positive definite.  This establishes Corollary \ref{functional form of the auto-covariance functions we construct in this paper}. 

Now, we turn to the proofs of the remaining four theorems stated in Section~\ref{ea: summary of main results}: Theorems \ref{main thm: iso cov funs},  \ref{ea:forbidden subparagraph}, \ref{thm: extension for tree graphs}, and \ref{thm: simplified extension for tree graphs}.

\begin{proof}[\small{PROOF OF THEOREM \ref{main thm: iso cov funs}}]
Suppose  $\mathcal G$ is a graph with Euclidean edges and let $C:[0,\infty)\to\mathbb R$ be a non-constant and completely monotonic. The metric properties of both $d_{G,\mathcal G}$ and $d_{R,\mathcal G}$ imply that $d_{G,\mathcal G}(u,v) = 0 \Leftrightarrow u = v$ and $d_{R,\mathcal G}(u,v) = 0 \Leftrightarrow u = v$ for all $u,v\in\mathcal G$.  Theorem~\ref{main thm: iso cov funs} now follows immediately from Theorem~\ref{ea: main embedding for dR} and Corollary~\ref{functional form of the auto-covariance functions we construct in this paper}. 

\end{proof}

\begin{proof}[\small{PROOF OF THEOREM \ref{ea:forbidden subparagraph}}]
By uniformly scaling $d_{G,\mathcal G}$ and possibly selecting new vertices on $\mathcal G$ by edge splitting operations (Proposition~\ref{splitting and merging edges}), 
one can obtain $6$ vertices $u_1,\ldots, u_6$ on $\mathcal G$ which have the following geodesic pairwise distance matrix where $0<t\le r \le 1$:
    \begin{align*}
        \left\{d_{G,\mathcal G}(u_i,u_j)\right\}_{i,j = 1}^6=
        \left(
        \begin{array}{cccccc}
            0 & t & 1 & r+1 & 1 & t+1 \\
            t & 0 & 1-t & r-t+1 & t+1 & 1 \\
            1 & 1-t & 0 & r & 2 t & t \\
            r+1 & r-t+1 & r & 0 & r & r+t \\
            1 & t+1 & 2 t & r & 0 & t \\
            t+1 & 1 & t & r+t & t & 0
        \end{array}
        \right)
    \end{align*}
    corresponding to the following subgraph:
        \begin{figure}[H]
        \centering
        \includegraphics[height=1.65in]{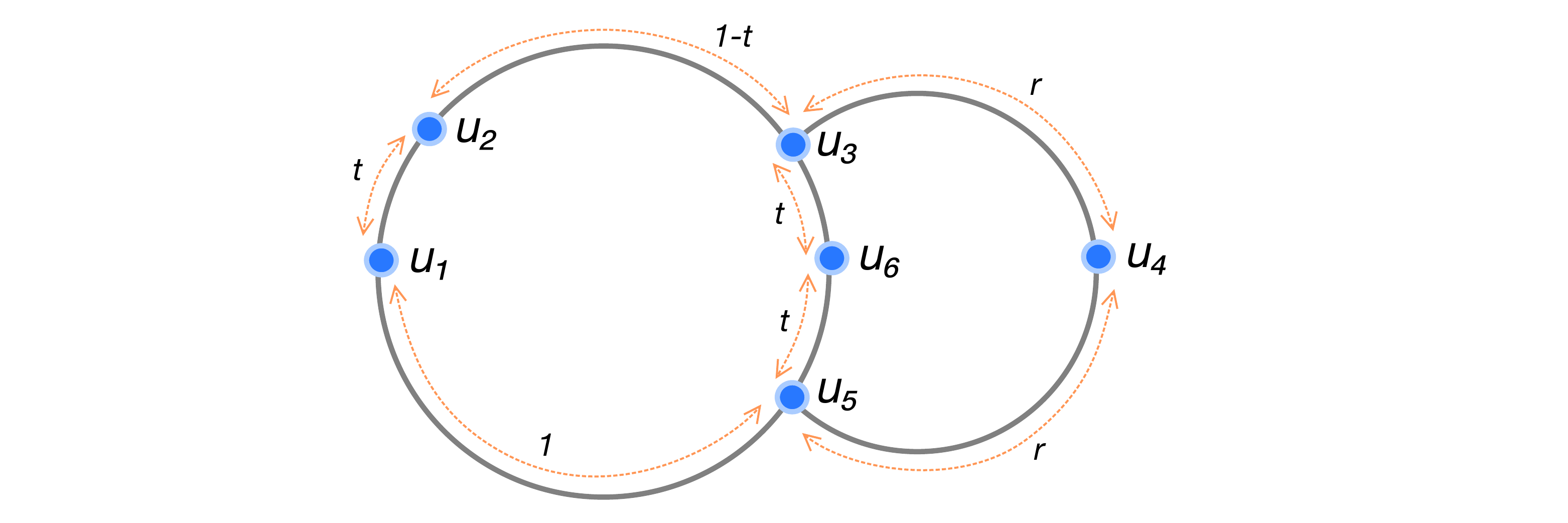}%
    \end{figure}
  \noindent  The values $2t$, $2r$ and $2$  represent the lengths of the three paths connecting vertices $u_3$ and $u_5$, ordered smallest to largest. Notice that in the case $t = r = 1$ one has $u_3 = u_5$ which implies the graph shown above will only have $5$ distinct vertices. 
  Forming the matrix $\Sigma = \frac{1}{2} \{d_{G,\mathcal G}(u_i,u_1)+d_{G,\mathcal G}(u_1,u_j) - d_{G,\mathcal G}(u_i,u_j)\}_{i,j = 2}^6$ gives
    \begin{align*}
        \Sigma
        & =
        \left(
        \begin{array}{ccccc}
    t & t & t & 0 & t \\
    t & 1 & 1 & 1-t & 1 \\
    t & 1 & r+1 & 1 & 1 \\
    0 & 1-t & 1 & 1 & 1 \\
    t & 1 & 1 & 1 & t+1 \\
\end{array}
        \right).
    \end{align*}
    Setting $\boldsymbol\xi =  (-1, -\xi, \xi, -1, 1)^T$, we have
    $\boldsymbol\xi^T\Sigma \boldsymbol\xi = \xi (r \xi-2 t)$, so  $\boldsymbol\xi^T\Sigma \boldsymbol\xi< 0$ when $0<\xi< 2t/r$, implying
     that  $c(u_i,u_j)= \frac{1}{2}(d_{G,\mathcal G}(u_i,u_1) + d_{G,\mathcal G}(u_j, u_1) - d_{G,\mathcal G}(u_i, u_j))$ is not positive semi-definite over $\{u_1,\ldots, u_6 \}$. Then Theorem \ref{Schoenberg's thm} gives the existence of a $\beta>0$ such that $\exp(-\beta d_{G,\mathcal G}(u,v))$ is \textit{not} positive semi-definite over $\{u_1,\ldots, u_6 \}$.
\end{proof}

\begin{proof}[\small{PROOF OF THEOREMS \ref{thm: extension for tree graphs} and \ref{thm: simplified extension for tree graphs}}]

Let $\mathcal G$ be a Euclidean tree with $m$ leaves, where $m\geq 3$. Set $n = \lceil \frac{m}{2} \rceil$ and let $(\mathbb R^n, d_1)$ denote the usual $\ell_1$ metric space so that $d_1(x,y)=\sum_{i=1}^n |x_i - y_i|$ 
for $x,y\in\mathbb R^n$. 

First, we show that $(\mathcal G, d_{G,\mathcal G})\overset{id}\hookrightarrow (\mathbb R^n, d_1)$ and $(\mathcal G, d_{R,\mathcal G})\overset{id}\hookrightarrow (\mathbb R^n, d_1)$.   By well known properties of tree graphs, $(\mathcal V(\mathcal G), d_{G,\mathcal G})\overset{id}\hookrightarrow (\mathbb R^n, d_1)$, see e.g.\ Proposition 11.1.4 in \cite{deza1996geometry}.
To extend this embedding from 
$\mathcal V(\mathcal G)$ to all points in $\mathcal G$ it will be sufficient, by Theorem 3.2.2 in \cite{deza1996geometry}, to show $(\mathcal U, d_{G,\mathcal G})\overset{id}\hookrightarrow (\mathbb R^n, d_1)$ for any finite subset $\mathcal U\subset \mathcal G$. Since $(\mathcal U\cup\mathcal V(\mathcal G) , d_{G,\mathcal G})$ is also isometric to a tree graph with $m$ leaves, we have that $(\mathcal U\cup\mathcal V(\mathcal G), d_{G,\mathcal G})\overset{id}\hookrightarrow (\mathbb R^n, d_1)$. This implies that $(\mathcal U,d_{G,\mathcal G})$ embeds into  $(\mathbb R^n, d_1)$, via restriction, as was to be shown. Now, since $\mathcal G$ is a Euclidean tree, Proposition~\ref{relation btwn dG and dr} implies $d_{G,\mathcal G}=d_{R,\mathcal G}$. Therefore, we also have $(\mathcal G, d_{R,\mathcal G})\overset{id}\hookrightarrow (\mathbb R^n, d_1)$.

Second,  Theorem 3.1 of \cite{cambanis1983charfuns} implies $C(d_1(x,y))$ is positive semi-definite over $x,y\in \mathbb R^n$, thus proving Theorem~\ref{thm: extension for tree graphs} by $(\mathcal G, d_{G,\mathcal G})\overset{id}\hookrightarrow (\mathbb R^n, d_1)$ and $(\mathcal G, d_{R,\mathcal G})\overset{id}\hookrightarrow (\mathbb R^n, d_1)$. 
Finally, Theorem 3.2 in \cite{gneiting1998charfuns} establishes Theorem~\ref{thm: simplified extension for tree graphs}.
\end{proof}

%
%

\section{Restricted covariance function properties}\label{sec.forbidden}

The restriction on the parameter $\alpha$ in 
Table \ref{ea:table of acf} agrees with results for similar families
of covariance functions for isotropic random fields on the
$d$-dimensional sphere $\mathbb S^d$
\citep{gneiting2013strictly}. This may be no surprise, since a
Euclidean cycle is similar to the circle $\mathbb S^1$; in fact, all the covariance functions in Table~1 of \cite{gneiting2013strictly} for the circle can be adapted when $\mathcal G$ is a Euclidean cycle.
Below Corollary~\ref{cor-to-thm: restrictions on the variogram} shows that the restriction is in general also needed
when considering a Euclidean tree $\mathcal G$,
noting that if $\mathcal G$ has maximum degree
$n<\infty$, then $\mathcal G$ has a star-shaped subgraph with $n+1$
vertices and $n$ edges. Moreover, Corollary~\ref{cor.nonnegunbsup} shows that there are some quite
severe limitations on the kind of covariance function that are valid
for arbitrary Euclidean trees (and thus also arbitrary graphs with
Euclidean edges).

%

In the following we only consider Euclidean
trees. Then, by Theorem~\ref{relation btwn dG and
  dr}, $d_{G,\mathcal{G}}=d_{R,\mathcal{G}}$ and we use $d_{\cdot,\mathcal{G}}$ as a common
notation for the two metrics. 

\begin{prop}\label{thm: restrictions on the variogram}
If $Z$ is a random field on a Euclidean tree $\mathcal G$ which
contains a star-shaped tree subgraph $\mathcal S_n$ with $n+1$
vertices and $n$ edges, and $\tilde\alpha,\tilde\beta>0$ are numbers
so that $\text{var}(Z_n(u)-Z_n(v))=\tilde\beta
d_{\cdot,\mathcal{G}}(u,v)^{\tilde\alpha}+o(d_{\cdot,\mathcal{G}}(u,v))$
when $d_{\cdot,\mathcal{G}}(u,v)\rightarrow 0$, then $\tilde\alpha\leq
\log(2n/(n-1))/\log(2)$.
\end{prop}
\begin{proof}
Let $u_0$ be the vertex in $\mathcal S_n$ with degree $n$ and consider the variogram $d(u,v)^2 = \text{var}(Z(u) - Z(v))$. By Theorem~\ref{Schoenberg's thm},
$C(u,v)=d(u, u_0)^2 + d(v, u_0)^2 - d(u, v)^2$ is positive semi-definite over $\mathcal S_n\times \mathcal S_n$. For $i=1,\ldots, n$ and $\epsilon >0$ sufficiently small, let $u_{i,\epsilon}$ be the point on the $i^\text{th}$ edge which has $d_{\cdot,\mathcal{G}}$ distance $\epsilon$ from $u_0$. The assumption on $\text{var}(Z(u)-Z(v))$ implies
\begin{align*}
d(u_{i,\epsilon},u_{0})^2 &=\tilde\beta  d_{\cdot,\mathcal{G}}(u_{i,\epsilon},u_0)^{\tilde\alpha} + o\left(d_{\cdot,\mathcal{G}}(u_{i,\epsilon},u_0)\right)= \tilde\beta \epsilon^{\tilde\alpha}+ o\left(\epsilon^{\tilde\alpha}\right) \\
d(u_{i,\epsilon},u_{j,\epsilon})^2 &= \tilde\beta d_{\cdot,\mathcal{G}}(u_{i,\epsilon},u_{j,\epsilon})^{\tilde\alpha}+o\left(d_{\cdot,\mathcal{G}}(u_{i,\epsilon},u_{j,\epsilon})\right) = \tilde\beta 2^{\tilde\alpha}\epsilon^{\tilde\alpha}+ o\left(\epsilon^{\tilde\alpha}\right)
\end{align*}
 when $i\neq j$. Let $\Sigma_\epsilon$ be the $n\times n$ covariance matrix with $(i,j)^\text{th}$ entry
\[(\Sigma_\epsilon)_{i,j}=C(u_{i,\epsilon},u_{j,\epsilon})
= \tilde\beta (2   - 2^{\tilde\alpha})\epsilon^{\tilde\alpha} + \tilde\beta 2^{\tilde\alpha}\epsilon^{\tilde\alpha}\delta_{ij} + o(\epsilon^{\tilde\alpha}),\]
then
\[\Sigma_\epsilon = \tilde\beta (2   - 2^{\tilde\alpha})\epsilon^{\tilde\alpha} \bs 1_n \bs 1_n^T  + \tilde\beta 2^{\tilde\alpha}\epsilon^{\tilde\alpha} I_n  + o(\epsilon^{\tilde\alpha})A\]
 where $I_n$ is the $n\times n$ identity matrix, $\bs 1_n$ is the vector of length $n$ with each coordinate equal to $1$, and $A$ is some $n\times n$ matrix not depending on $\epsilon$. Now,
\begin{align*}
0\le \det(\Sigma_\epsilon) &=\tilde\beta^n 2^{n\tilde\alpha}\epsilon^{n\tilde\alpha}\det\Big( (2^{1-\tilde\alpha}   - 1) \bs 1_n \bs 1_n^T + I_n + o(1)A\Big) \\
&= \tilde\beta^n 2^{n\tilde\alpha}\epsilon^{n\tilde\alpha}\big( (2^{1-\tilde\alpha}   - 1) n  + 1\big)+ o(\epsilon^{n\tilde\alpha}).
\end{align*}
Consequently $(2^{1-\tilde\alpha}   - 1) n  + 1 \geq 0$ as was to be shown.
\end{proof}

\begin{corollary}\label{cor-to-thm: restrictions on the variogram}
  Let $C$ be one of the functions given in Table \ref{ea:table of acf} but with $\alpha$ outside the parameter range, i.e., $\alpha>\frac{1}{2}$ in case of the Mat{\'e}rn class and $\alpha>1$ in case of the other three classes. Then there exists a
  Euclidean tree $\mathcal G$ so that
  $C(d_{\cdot,\mathcal{G}}(u,v))$ is not a
  covariance function.
\end{corollary}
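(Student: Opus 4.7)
The plan is to apply Theorem~\ref{thm: restrictions on the variogram} directly to a star-shaped Euclidean tree $\mathcal S_n$ with $n+1$ vertices and $n$ edges, where $n$ will be chosen large. Normalizing so that $C(0)=1$, the variogram of any random field with covariance $C(d_{\cdot,\mathcal G}(\cdot,\cdot))$ is $\text{var}(Z(u)-Z(v))=2\bigl(1-C(d_{\cdot,\mathcal G}(u,v))\bigr)$. I would first read off the leading power-law behaviour $1-C(t)\sim \tilde c\, t^{\tilde\alpha}$ as $t\downarrow 0$ for each of the four classes, then invoke the theorem: if $C(d_{\cdot,\mathcal G}(\cdot,\cdot))$ were a valid covariance function on $\mathcal G=\mathcal S_n$, then $\tilde\alpha\leq \log(2n/(n-1))/\log 2$. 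Since this upper bound tends to $1$ as $n\to\infty$, any $\tilde\alpha>1$ is excluded once $n$ is sufficiently large, and setting $\mathcal G:=\mathcal S_n$ for such an $n$ provides the required Euclidean tree.

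The bulk of the work is the four asymptotic expansions, three of which are elementary. For the power exponential, $1-\exp(-\beta t^\alpha)\sim \beta t^\alpha$, so $\tilde\alpha=\alpha$. For the generalized Cauchy, a first-order Taylor expansion gives $1-(1+\beta t^\alpha)^{-\xi/\alpha}\sim (\xi\beta/\alpha)t^\alpha$, so $\tilde\alpha=\alpha$. For the Dagum class, $\bigl(\beta t^\alpha/(1+\beta t^\alpha)\bigr)^{\xi/\alpha}\sim \beta^{\xi/\alpha}\, t^\xi$, so the relevant exponent is $\tilde\alpha=\xi$. For the Mat\'ern class one uses the classical small-argument expansion of $K_\alpha$ (based on the series for $I_{\pm\alpha}$ and the reflection identity $K_\alpha=\tfrac{\pi}{2\sin(\pi\alpha)}(I_{-\alpha}-I_\alpha)$, with a logarithmic correction treated separately when $2\alpha$ is an integer), which yields $1-C(t)/C(0)\sim c'\, t^{2\alpha}$ for an explicit positive constant $c'$, so $\tilde\alpha=2\alpha$.

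Combining the computation with the corollary's hypothesis: for power exponential and generalized Cauchy with $\alpha>1$, and for Mat\'ern with $\alpha>\tfrac12$, we obtain $\tilde\alpha>1$; for the Dagum class, $\alpha>1$ taken together with the boundary constraint $\xi\leq 1$ gives $\tilde\alpha=\xi\leq 1$, so the same contradiction goes through only when the parameter pushed outside its range is $\xi$ rather than $\alpha$, which is the natural reading since $\xi$ is the actual Taylor exponent at the origin. In each of the cases with $\tilde\alpha>1$, choose $n$ with $\log(2n/(n-1))/\log 2 <\tilde\alpha$ and set $\mathcal G=\mathcal S_n$; Theorem~\ref{thm: restrictions on the variogram} then rules out the existence of a random field on $\mathcal G$ with variogram having the required leading behaviour, so $C(d_{\cdot,\mathcal G}(\cdot,\cdot))$ cannot be a covariance function.

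The main obstacle I anticipate is the Mat\'ern expansion, since $K_\alpha$ near zero is not analytic and the leading correction to $t^\alpha K_\alpha(t)$ changes form depending on whether $2\alpha$ is an integer; a uniform argument will require a bit of case analysis. Everything else is a routine Taylor expansion followed by a mechanical application of Theorem~\ref{thm: restrictions on the variogram}.
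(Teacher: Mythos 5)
Your proposal follows the paper's own route: identify the leading power $\tilde\alpha$ of the variogram $2\bigl(C(0)-C(t)\bigr)$ at the origin for each class, apply Theorem~\ref{thm: restrictions on the variogram} to a star $\mathcal S_n$, and let $n\to\infty$ to force $\tilde\alpha\le 1$. Two remarks. First, on the Dagum class you are right where the paper's proof is not: the paper asserts the hypothesis of Theorem~\ref{thm: restrictions on the variogram} holds with $\tilde\alpha=\alpha$ and $\tilde\beta=2\beta\xi/\alpha$, but as you compute, $1-C(t)=\bigl(\beta t^\alpha/(1+\beta t^\alpha)\bigr)^{\xi/\alpha}\sim\beta^{\xi/\alpha}t^{\xi}$, so the local exponent is $\xi$, not $\alpha$; when $\xi\le 1$ the theorem therefore yields no contradiction for $\alpha>1$, and the Dagum case of Corollary~\ref{cor-to-thm: restrictions on the variogram} as stated is not actually established by this local argument (it would be if the parameter pushed out of range were $\xi$). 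Flagging this honestly is the one substantive point where you diverge from, and in effect correct, the paper. Second, your Mat\'ern expansion has a small slip: the leading correction to $t^\alpha K_\alpha(t)$ is of order $t^{\min(2,2\alpha)}$ (with a logarithmic factor when $2\alpha$ is an integer), so $\tilde\alpha=2\alpha$ only for $\alpha\le 1$; the paper's choice $\tilde\alpha=\alpha+1-|\alpha-1|$ encodes exactly this crossover at $\alpha=1$. Since for $\alpha>1$ one still gets $\tilde\alpha=2>1$, the conclusion is unaffected, but the asymptotic you state is not correct in that regime and the case analysis you anticipate should be organized around $\alpha=1$ rather than around integrality of $2\alpha$ alone.
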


\begin{proof} Suppose $Z_n$ is a random field on a
Euclidean tree $\mathcal G$ which contains a star-shaped graph
$\mathcal S_n$ with $n+1$ vertices and $n$ edges, with 
an isotropic covariance function with radial profile $C$ and $\alpha>0$.

If $C$ is in the Mat{\'e}rn class, let
  \[\tilde\alpha =
  \alpha+1-|\alpha-1|,\qquad \tilde\beta=\frac{\beta^{\alpha+1-|\alpha-1|}\Gamma(|\alpha-1|)2^{|\alpha-1|-\alpha}}
    {\tilde\alpha\Gamma(\alpha)}.\]
  By L'Hospital's Rule, equation 24.56 in
  \cite{schaum-68} and equation 9.6.9 in \cite{abramowitz:stegun:64},
  \begin{align*}
   & \lim_{d_{\cdot,\mathcal{G}}(u,v)\rightarrow0}
    \frac{\text{var}(Z(u)-Z(v))}{d_{\cdot,\mathcal{G}}(u,v)^{\tilde\alpha}}\\
    =&\, \lim_{d_{\cdot,\mathcal{G}}(u,v)\rightarrow0}
    \frac{2(1-\frac{1}{\Gamma(\alpha)2^{\alpha-1}} (\beta d_{\cdot,\mathcal
        G}(u,v))^{\alpha}K_\alpha(\beta d_{\cdot,\mathcal{G}}(u,v))}
    {d_{\cdot,\mathcal{G}}(u,v)^{\tilde\alpha}}\\
    =&\, \lim_{d_{\cdot,\mathcal{G}}(u,v)\rightarrow0}
    \frac{\beta^{\alpha+1}}{\Gamma(\alpha)2^{\alpha-1}\tilde\alpha}
    d_{\cdot,\mathcal{G}}(u,v)^{\alpha-\tilde\alpha+1}
    K_{\alpha-1}(\beta d_{\cdot,\mathcal{G}}(u,v))\\
    =&\, \lim_{d_{\cdot,\mathcal{G}}(u,v)\rightarrow0}\frac{\beta^{\alpha+1-|\alpha-1|}\Gamma(|\alpha-1|)2^{|\alpha-1|-\alpha}}
    {\tilde\alpha\Gamma(\alpha)}
    d_{\cdot,\mathcal{G}}(u,v)^{\alpha-\tilde\alpha+1-|\alpha-1|}=\tilde\beta.
  \end{align*}
  Hence Proposition~\ref{thm: restrictions on the variogram} applies, and
  letting
  $n\rightarrow\infty$ we obtain $\tilde\alpha\le 1$ or equivalently $\alpha\leq\frac{1}{2}$, thus proving the assertion.

If $C$ is in one of the other three classes, it
follows directly from L'Hospital's Rule that the requirement of the
variogram $\text{var}(Z_n(u)-Z_n(v))$ of Theorem~\ref{thm:
  restrictions on the variogram} is satisfied for $\tilde\alpha =
\alpha$ when $\tilde\beta=2\beta$ in case of the power exponential class and
$\tilde\beta=2\beta\xi/\alpha$ in case of the generalized Cauchy or
the Dagum class.  Letting $n\rightarrow\infty$ in
Proposition~\ref{thm: restrictions on the variogram}, we get that
$\alpha\leq1$, thus completing the proof.
\end{proof}

\begin{prop}
  Suppose $(u,v)\to C(d_{\cdot,\mathcal{G}}(u,v))$ is a covariance function
  on a Euclidean tree $\mathcal{G}$ containing a star-shaped tree subgraph
  ${\mathcal S}_n$ with $n\ge2$ edges 
  of length
  larger than or equal to $t_0>0$. For all $t\in(0,t_0]$, we have
  \begin{equation}\label{eq.covineq1}
 -\frac{C(0)}{n-1}\leq C(2t)\leq C(0),\qquad  \frac{n C(t)^2-C(0)^2}{n-1} \le C(0)C(2t).
  \end{equation}
\end{prop}

\begin{proof} 
Denote $e_1,\ldots,e_n$ the edges of ${\mathcal S}_n$, and $u_{n+1}$ their common vertex.
  Let
  $t\in(0,t_0)$ and $u_i\in e_i$ such that
  $d_{\mathcal{\cdot,\mathcal{G}}}(u_{n+1},u_i)=t$ for $i=1,\ldots,n$. Note that
  $d_{\mathcal{\cdot,\mathcal{G}}}(u_i,u_j)=2t$ for $i,j=1,\ldots,n$ and
  $i\not=j$. Let $\Sigma$ denote the $(n+1)\times(n+1)$ matrix
  with the $(i,j)^{\text{th}}$ entry equal to $C(d_{\mathcal{\cdot,\mathcal{G}}}(u_i,u_j))$,
  i.e.,
  \[
  \Sigma_{i,j} =
  \begin{cases}
    C(0) & \text{if } i=j,\\
    C(2t) & \text{if } i\not= j \text{ and } i,j<n+1,\\
    C(t) & \text{otherwise}.
  \end{cases}
  \]
  As $\Sigma$ is a covariance matrix,
   its principal minors are non-negative
 determinants; these are of the form $\det(\Sigma_k)$ with $k\in\{1,\ldots,n\}$ or $\det(\Sigma_k')$ with $k\in\{2,\ldots,n+1\}$; here,
   $\Sigma_k$ denotes a $k\times
  k$ submatrix of $\Sigma$ with the same rows and columns removed and where
  the $(n+1)^{\text{th}}$ row and column have been removed; and
  $\Sigma_k'$ is defined in a similar way
   but where the $(n+1)^{\text{th}}$ row
  and column have not been removed.
  It is easily verified that
  \begin{equation*}\label{eq.covineq1a}
  \det(\Sigma_k) = \left(C(0)-C(2t)\right)^{k-1}\left\{(k-1)C(2t)+C(0)\right\}
  \end{equation*}
   for $k=1,\ldots,n$, and hence either $0\le C(0)=C(2t)$ or both $C(0)>C(2t)$ and
  $(k-1)C(2t)+C(0)\ge0$, implying
   the first inequality in \eqref{eq.covineq1}, where
  we have let $k=n$ to obtain the highest lower bound. Moreover,
  \begin{equation*}\label{eq.covineq2a}
  \det(\Sigma_k') = \left\{C(0)-C(2t)\right\}^{k-2}\left\{C(0)^2 + (k-2)C(2t)C(0) -
    (k-1)C(t)^2\right\}
  \end{equation*}
  for $k=2,\ldots,n+1$, and so either $|C(t)|\le C(0)=C(2t)$ or both $C(0)>C(2t)$ and
    $C(0)^2 + (k-2)C(2t)C(0) - (k-1)C(t)^2\ge0$, implying the second inequality in \eqref{eq.covineq1},
  where we have used
  $k=n+1$ to get the highest lower bound.
\end{proof}

\begin{corollary}\label{cor.nonnegunbsup}
  A function $(u,v)\to C(d_{\mathcal{\cdot,\mathcal{G}}}(u,v))$ which is a covariance
  function on all Euclidean trees has to be non-negative, and
  furthermore either have unbounded support or fulfill $C(t)=0$ for
  all $t>0$.
\end{corollary}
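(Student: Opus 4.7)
The plan is to reduce the corollary to two consequences of the preceding theorem by letting $n\to\infty$. Since by hypothesis $(u,v)\mapsto C(d_{\cdot,\mathcal G}(u,v))$ is a covariance function on \emph{every} Euclidean tree, for any given $n\ge 2$ and any $t_0>0$ we may choose a Euclidean tree containing a star-shaped subgraph $\mathcal S_n$ with $n+1$ vertices and $n$ edges, each of length at least $t_0$. Consequently the two inequalities in \eqref{eq.covineq1} are simultaneously available for every $n\ge 2$ and every $t>0$, and the proof reduces to exploiting them in the large-$n$ limit.

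First I would use the lower half of the first inequality: $-C(0)/(n-1)\le C(2t)$. Fixing $t>0$ and sending $n\to\infty$ yields $C(2t)\ge 0$, and since every $s>0$ may be written as $2t$ for some $t>0$, this gives $C(s)\ge 0$ on $(0,\infty)$. Combined with $C(0)\ge 0$ (the variance of the field at a single point), this establishes the non-negativity assertion.

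Next I would send $n\to\infty$ in the second inequality. After dividing through by $n-1$, the bound $(nC(t)^2-C(0)^2)/(n-1)\le C(0)C(2t)$ passes in the limit to the multiplicative estimate
\[
  C(t)^2 \;\le\; C(0)\,C(2t), \qquad t>0.
\]
From here I would argue the dichotomy. If $C(0)=0$, this inequality together with non-negativity of $C(2t)$ forces $C(t)=0$ for all $t>0$. Otherwise, suppose some $s>0$ satisfies $C(s)>0$; then the estimate gives $C(2s)\ge C(s)^2/C(0)>0$, and iterating we obtain $C(2^k s)>0$ for every $k\ge 0$. Hence $C$ takes strictly positive values at arbitrarily large arguments, so its support is unbounded.

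I do not anticipate any genuine obstacle: the preceding theorem carries all of the combinatorial content, and the role of the corollary is merely to extract its two limiting consequences and combine them through the bound $C(t)^2\le C(0)C(2t)$. The only point that deserves explicit mention is that the hypothesis ``covariance function on all Euclidean trees'' is precisely what allows $n$ to be taken arbitrarily large in both inequalities of the preceding theorem for every fixed $t>0$.
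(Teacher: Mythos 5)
Your proof is correct and takes essentially the same approach as the paper's: both let $n\to\infty$ (and $t_0\to\infty$) in the two inequalities of the preceding theorem to obtain $C(2t)\ge 0$ and $C(t)^2\le C(0)\,C(2t)$ for all $t>0$, and both settle the dichotomy by iterating the doubling inequality. The only cosmetic difference is the direction of the iteration --- the paper argues contrapositively that bounded support forces $C(t)=0$ for all $t>0$ by pushing the vanishing threshold downward, whereas you push a positive value $C(s)>0$ upward along $2^k s$; these are equivalent.
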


\begin{proof}
  Letting $n\rightarrow\infty$ and $t_0\rightarrow\infty$, the first inequality in \eqref{eq.covineq1} implies
   non-negativity of $C$, and the second inequality in \eqref{eq.covineq1} implies $C(0)C(2t)\geq C(t)^2$ for all
  $t>0$. Thus, if for some
  $t_1>0$ and all $t>t_1$ we have $C(2t)=0$, then $C(t)=0$ for all $t>t_1$, from which it follows by
  induction that $C(t)=0$ for all $t>0$.
\end{proof}



To illustrate the scope of the covariance function restrictions given above it is instructive to consider the variogram suggested on page 205 in \cite{okabe:sugihara:12} in connection to linear networks. If there is a corresponding isotropic covariance function, its radial profile is given by
\[
C(t) = \begin{cases} \beta_0 + \beta_1\beta_2 & \mbox{if }t=0, \\
  \beta_1(\beta_2-t) & \mbox{if }0<t\leq \beta_2\\ 0 & \mbox{if } t>\beta_2,\end{cases}
\]
where $\beta_0, \beta_1, \beta_2>0$ are parameters. As this function has bounded support, by
Corollary~\ref{cor.nonnegunbsup} this cannot be a valid covariance function
on an arbitrary graph with Euclidean edges (or an arbitrary linear
network). However, as remarked in the paragraph proceeding Theorem~\ref{thm: extension for tree graphs}, this does not preclude the positive definiteness of $C(t)$ on a particular {\sl fixed} tree graph. Indeed, Theorem~\ref{thm: simplified extension for tree graphs} can be invoked to imply that when $\mathcal G$ is a Euclidean tree with $m\geq 3$ leaves, then $C(t)^\alpha$ is positive semi-definite (with respect to $d_{R,\mathcal G}=d_{G,\mathcal G}$) for any $\alpha\geq 2 \lceil m/2\rceil - 1$.

\appendix

\section{Proofs}


 \subsection{Proof of Proposition~\ref{the RKHS associated with Z_G}}\label{a:1}

To verify Proposition~\ref{the RKHS associated with Z_G}, recall Definition~\ref{ea:diriclet form def}. We use the notation $I_A$ for an indicator function which is 1 on a set $A$ and 0 otherwise. We need the following lemmas.

  \begin{lemma} 
      $(\mathcal F,\langle \cdot, \cdot\rangle_{\mathcal F})$ is an inner product vector space,  with metric $\|f\|_{\mathcal F}:=\sqrt{\langle f, f\rangle_{\mathcal F}}$ given by
      \begin{equation}\label{ea:metricF}
          \|f\|_{\mathcal F}^2=f(u_o)^2 \,+\, \sum_{e\in \mathcal E(\mathcal G)}\, \int_{\underline e}^{\overline e} f_e^{\,\prime} (t)^2 \,\mathrm dt.
      \end{equation}
  \end{lemma}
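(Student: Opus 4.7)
The plan is to verify the four requirements in turn: (i) $\mathcal F$ is a vector space, (ii) $\langle\cdot,\cdot\rangle_{\mathcal F}$ is symmetric and bilinear, (iii) the formula \eqref{ea:metricF} for $\|f\|_{\mathcal F}^2$ follows by setting $g=f$, and (iv) $\langle f,f\rangle_{\mathcal F}\ge 0$ with equality only for $f\equiv 0$. Items (i)-(iii) are essentially bookkeeping and (iv) is where the graph structure actually enters.

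For (i), given $f,g\in\mathcal F$ and $\alpha,\beta\in\Bbb R$, continuity of $\alpha f+\beta g$ with respect to $d_{G,\mathcal G}$ is immediate; for each edge $e$, $(\alpha f+\beta g)_e=\alpha f_e+\beta g_e$ is a linear combination of absolutely continuous functions and hence absolutely continuous, with derivative $\alpha f_e^{\,\prime}+\beta g_e^{\,\prime}\in L^2([\underline e,\overline e])$ (since $L^2$ is a vector space). For (ii), symmetry is evident from the definition, and bilinearity follows from linearity of pointwise evaluation at $u_o$, of differentiation on each edge, and of the integral. For (iii), substituting $g=f$ into \eqref{def of inner product} directly yields \eqref{ea:metricF} and shows $\langle f,f\rangle_{\mathcal F}\ge 0$, since both $f(u_o)^2\ge 0$ and each integrand $f_e^{\,\prime}(t)^2\ge 0$.

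The main step is the non-degeneracy in (iv). Suppose $\|f\|_{\mathcal F}=0$. Then $f(u_o)=0$, and for every edge $e\in\mathcal E(\mathcal G)$ we have $f_e^{\,\prime}=0$ Lebesgue a.e.\ on $[\underline e,\overline e]$. Absolute continuity of $f_e$ then forces $f_e$ to be constant on $[\underline e,\overline e]$. Continuity of $f$ with respect to $d_{G,\mathcal G}$ identifies this constant with the common vertex values $f(\underline u)=f(\overline u)$ at the two endpoints of $e$. Since the underlying graph $(\mathcal V,\mathcal E)$ is connected (condition \eqref{def.gwee1} of Definition~\ref{def.gwee}), iterating this argument along a path from $u_o$ to an arbitrary vertex $v\in\mathcal V(\mathcal G)$ yields $f(v)=f(u_o)=0$, and therefore $f_e\equiv 0$ on every edge. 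Combined with $f(u_o)=0$, this gives $f\equiv 0$ on $\mathcal G$, completing the proof that $\langle\cdot,\cdot\rangle_{\mathcal F}$ is a bona fide inner product.

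I do not anticipate genuine difficulty; the only subtle point is the interplay between the a.e.\ vanishing of the derivative and pointwise continuity on $\mathcal G$, which is handled by absolute continuity of $f_e$ on each edge together with the connectivity of $(\mathcal V,\mathcal E)$.
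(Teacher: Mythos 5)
Your proposal is correct and follows essentially the same route as the paper's proof: derive the norm formula by setting $g=f$, note symmetry, bilinearity and positive semi-definiteness, and obtain non-degeneracy from $f(u_o)=0$ together with $f_e^{\,\prime}=0$ a.e.\ forcing $f_e$ constant on each edge, propagated by continuity. The paper's version is terser (it leaves the connectivity step implicit in "the continuity requirement"), whereas you spell it out explicitly; the substance is identical.
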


  \begin{proof}
      From \eqref{def of inner product} we obtain \eqref{ea:metricF}. Note that $\langle f, f\rangle_{\mathcal F}=0$ implies both $f(u_o)=0$ and for any $e\in\mathcal E(\mathcal G)$, $f_e$ is almost everywhere constant on $e$. The continuity requirement of $f\in \mathcal F$ then implies  $\langle f, f\rangle_{\mathcal F}=0 \Leftrightarrow f=0$.
      Finally, $\langle \cdot, \cdot\rangle_{\mathcal F}$ is clearly symmetric, bilinear and positive semi-definite over $f\in\mathcal F$.
  \end{proof}

 For $f\in\mathcal F$, $u\in\mathcal G$ and $e\in\mathcal E(\mathcal G)$, define
  \begin{equation*}\label{e:bad notation}
      f_\mu(u)=(1-d(u))f(\underline u) + d(u)f(\overline u),
      \qquad
      f_{e,r}(u)=\begin{cases} f(u)-f_\mu(u) & \text{ if $u\in e$},\\ 0 & \text{ otherwise,}\end{cases}
  \end{equation*}
where $d(u)$ is defined in (\ref{ea: definition of d(u)}).
It will be convenient to denote the operations $f\to f_\mu$ and $f \to f_{e,r}$ with operator notation $\mathscr P_\mu:\mathcal F\to \mathcal F$ and $\mathscr P_e:\mathcal F\to \mathcal F$ given by
$\mathscr P_\mu f=f_\mu$ and $\mathscr P_e f=f_{e,r}$.
In addition, the inner product $\langle \cdot, \cdot \rangle_{\mathcal F}$ restricted to the function spaces  $\mathscr P_\mu\mathcal F$ and $\mathscr P_e\mathcal F$ will be denoted $\langle \cdot, \cdot\rangle_\mu = \langle \cdot, \cdot\rangle_\mathcal F\Big|_{\mathscr P_\mu\mathcal F\times \mathscr P_\mu\mathcal F}$ and
$\langle \cdot, \cdot\rangle_{e,r} =
\langle \cdot, \cdot\rangle_\mathcal F\Big|_{\mathscr P_e\mathcal F\times \mathscr P_e\mathcal F}$.

 \begin{lemma}\label{lemma on direct sum} If $\mathcal G$ is a graph with Euclidean edges, then for all $e\in\mathcal E(\mathcal G)$, $\mathscr P_\mu$ and $\mathscr P_e$
     are mutually orthogonal projections and $\mathcal F$ is a direct sum:
     \begin{equation}\label{e:structure}
         \mathcal F=\mathscr P_\mu\mathcal F\oplus\bigoplus_{e\in\mathcal E(\mathcal G)}\mathscr P_e\mathcal F.
     \end{equation}
 \end{lemma}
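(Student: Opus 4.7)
The plan is to verify, in order: (i) the operators $\mathscr P_\mu$ and $\mathscr P_e$ land inside $\mathcal F$, (ii) they are idempotent, (iii) $\mathscr P_\mu f + \sum_{e\in\mathcal E(\mathcal G)} \mathscr P_e f = f$ pointwise on $\mathcal G$, and (iv) the ranges are pairwise orthogonal with respect to $\langle \cdot,\cdot\rangle_{\mathcal F}$. Steps (iii) and (iv) together will give the orthogonal direct sum decomposition \eqref{e:structure} and will also imply that each $\mathscr P_\mu,\mathscr P_e$ is the orthogonal projection onto its range.

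For (i), I would observe that $\mathscr P_\mu f$ is affine on each edge $e$ with slope $\bigl(f(\overline u)-f(\underline u)\bigr)/\mathrm{len}(e)$ (where $u$ is any interior point of $e$), and agrees with $f$ at every vertex, so it is continuous on $\mathcal G$ and its edgewise derivative is a bounded constant, hence in $L^2$. For $\mathscr P_e f$, I would note that $f_{e,r} = f - f_\mu$ on $e$, which is absolutely continuous on $[\underline e,\overline e]$ with $L^2$ derivative, and $f_{e,r}$ vanishes at the two endpoints of $e$ as well as outside $e$; continuity on all of $\mathcal G$ follows. Idempotency in (ii) is then immediate: $\mathscr P_\mu f$ already takes the same values as $f$ at the vertices, so $\mathscr P_\mu(\mathscr P_\mu f)=\mathscr P_\mu f$; and $\mathscr P_e f$ vanishes at every vertex, so $(\mathscr P_e f)_\mu = 0$, yielding $\mathscr P_e(\mathscr P_e f)=\mathscr P_e f$. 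Step (iii) is the tautology $f = f_\mu + (f-f_\mu)$ applied edgewise, after noting that $f - f_\mu$ is nothing other than $\sum_e f_{e,r}$ as a function on $\mathcal G$ since the supports $\{e\}_{e\in \mathcal E(\mathcal G)}$ are disjoint.

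For (iv), the orthogonality between any two distinct $\mathscr P_{e_1}\mathcal F$ and $\mathscr P_{e_2}\mathcal F$ is essentially trivial: $\mathscr P_{e_1}f$ and $\mathscr P_{e_2}g$ are supported on disjoint edge sets, both vanish at $u_o$, and the integrand in \eqref{def of inner product} vanishes edge by edge, so $\langle \mathscr P_{e_1}f, \mathscr P_{e_2}g\rangle_{\mathcal F}=0$. The orthogonality of $\mathscr P_\mu\mathcal F$ with each $\mathscr P_e\mathcal F$ is the one calculation that takes a line of justification: $\mathscr P_e g$ vanishes at $u_o$ and is supported on $e$, so only the integral over $e$ survives; on $e$, the derivative $(\mathscr P_\mu f)_e'$ is a constant $c$, so
\[
\int_{\underline e}^{\overline e} (\mathscr P_\mu f)_e'(t)\,(\mathscr P_e g)_e'(t)\,\mathrm dt = c\bigl[(\mathscr P_e g)_e(\overline e) - (\mathscr P_e g)_e(\underline e)\bigr] = 0,
\]
the last equality because $\mathscr P_e g$ vanishes at the two edge endpoints. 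Combining (iii) and (iv) yields \eqref{e:structure}, and the pairwise orthogonality together with (ii) forces each $\mathscr P_\mu,\mathscr P_e$ to be the orthogonal projection onto its range.

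The only non-routine step is the orthogonality of $\mathscr P_\mu\mathcal F$ and $\mathscr P_e\mathcal F$; but once one exploits that $\mathscr P_\mu f$ is affine on each edge (constant derivative) and $\mathscr P_e g$ vanishes at the boundary of $e$, this reduces to the fundamental theorem of calculus and is immediate. Everything else is bookkeeping against the definition of $\mathcal F$.
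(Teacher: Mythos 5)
Your proposal is correct and follows essentially the same route as the paper: the paper's one-line proof rests on exactly the observation you isolate, namely that $(\mathscr P_\mu f)_e'$ is the constant $\bigl(f_e(\overline e)-f_e(\underline e)\bigr)/\mathrm{len}(e)$, which together with the vanishing of $\mathscr P_e g$ at the edge boundaries and at $u_o$ gives self-adjointness/orthogonality of the projections. You simply spell out the bookkeeping (well-definedness, idempotency, pointwise summation to the identity) that the paper leaves implicit.
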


 \begin{proof} This is straightforwardly verified as soon as it is noted that $\mathscr P_\mu$ and $\mathscr P_e$ are self-adjoint operators, which follows from
 the fact that
     \begin{align}
         \big[\!\left(f_\mu\right)_e\!\big]^{\prime}(t)=\frac{f_e(\overline e)-f_e(\underline e)}{\text{len}(e)},\qquad e\in\mathcal E(\mathcal G),\ t\in(\underline e,\overline e).\label{ea:derivativefmu}
     \end{align}
 \end{proof}

 \begin{lemma}\label{ea:repkernel}
     Let $\mathcal G$ be a graph with Euclidean edges with vertices $\mathcal V$ and edges $\mathcal E$.
     Also let
     \begin{itemize}
         \item $(\mathbb R^{\mathcal V}, \langle \cdot, \cdot\rangle_{L})$ denote the finite dimensional Hilbert space with inner product given by $\langle z, w\rangle_{L}=z^TLw$ as in \eqref{ea:zTLw};
         \item $H_e$ denote the infinite dimensional Hilbert space of absolutely continuous functions $f:[\,\underline e,\,\overline e\,]\to\mathbb R$ such that  $f^\prime \in L^2([\,\underline e,\, \overline e\,])$ with boundary condition $f(\underline e)=f(\overline e) = 0$, and with inner product
     $\langle f, g \rangle_{H_e} := \int_{\underline e}^{\overline e} f^\prime (t) g^\prime (t)\,\mathrm  dt$.
 \end{itemize}
     Then we have the following.
     \begin{enumerate}[{\rm (A)}]
         \item\label{the linear interpolation ismorphic space}
         $(\mathscr P_\mu \mathcal F, \langle \cdot, \cdot\rangle_\mu)$ is a finite dimensional Hilbert space which is isomorphic to $(\mathbb R^{\mathcal V},\langle\cdot,\cdot\rangle_{L})$ and has reproducing kernel $R_\mu$ (defined in \eqref{ea:R_mu in main txt}). Its inner product has simplified form for all  $f,g\in\mathscr P_\mu \mathcal F$:
         \begin{equation}\label{nodeEA Dirichlet form}
             \big\langle f, g \big\rangle_{\mu} = f(u_o)\,g(u_o) \,+\, \sum_{e\in \mathcal E}\frac{\left(f_e(\overline e)-f_e(\underline e)\right)\left(g_e(\overline e)-g_e(\underline e)\right)}{\text{\rm len}(e)}. 
         \end{equation}

         \item\label{residual ismorphic space}
         For each $e\in \mathcal E$,  $(\mathscr P_e \mathcal F, \langle \cdot, \cdot\rangle_{e,r})$ is an infinite dimensional Hilbert space which is isomorphic to $(H_e,\langle\cdot,\cdot\rangle_{H_e})$ and has reproducing kernel $R_e$ (given by \eqref{ea:R_e given in main txt}). Its inner product has simplified form for all $f,g\in\mathscr P_e \mathcal F$:
         \begin{equation}\label{BrownianB Dirichlet form}
             \big\langle f, g \big\rangle_{e,r} = \int_{\underline e}^{\overline e} f_e^{\,\prime} (t) g_e^{\,\prime}(t)\,\mathrm dt. 
         \end{equation}

         \item\label{total ismorphic space}
         $(\mathcal F, \langle \cdot, \cdot \rangle_{\mathcal F})$ is an infinite dimensional Hilbert space which is isomorphic to
         $\mathbb R^{\mathcal V}\otimes\bigotimes_{e\in \mathcal E}H_e$ and has reproducing kernel $R_{\mathcal G}$ (given by \eqref{ea:cov fun of Z_G}). Its inner product 
         has simplified form for all $f,g\in\mathcal F$:
         \begin{equation}\label{Full Dirichlet form}
             \big\langle f, g \big\rangle_{\mathcal F} = \big\langle f_\mu, g_\mu\big\rangle_\mu   \,+\, \sum_{e\in \mathcal E}\, \big\langle f_{e,r}, g_{e,r} \big\rangle_{e,r}.
         \end{equation}
     \end{enumerate}

 \end{lemma}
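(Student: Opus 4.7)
The plan is to establish parts (A) and (B) as isometric isomorphisms of Hilbert spaces, verify their reproducing kernels directly, and obtain part (C) as an almost immediate consequence of (A), (B), and the orthogonal direct sum decomposition provided by Lemma~\ref{lemma on direct sum}.

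For part (A), I would begin by noting that any $f \in \mathscr P_\mu \mathcal F$ is piecewise linear on every edge and hence completely determined by its vertex values, so the evaluation map $\Phi_\mu\colon \mathscr P_\mu \mathcal F \to \mathbb R^{\mathcal V}$, $f \mapsto (f(v_1),\ldots,f(v_n))^T$, is a vector space isomorphism. Substituting the edgewise derivative formula~\eqref{ea:derivativefmu} into~\eqref{ea:metricF} immediately yields~\eqref{nodeEA Dirichlet form}, and comparing with~\eqref{ea:zTLw} (noting that $c(u,v) = 1/\text{len}(e)$ for neighbours joined by $e$ and that each edge is counted twice in the sum over $u \sim v$) shows $\|f\|_{\mathcal F}^2 = \Phi_\mu(f)^T L\, \Phi_\mu(f)$, so $\Phi_\mu$ is an isometry. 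Since $(\mathbb R^{\mathcal V}, \langle \cdot,\cdot \rangle_L)$ has the tautological reproducing kernel $(v,w) \mapsto L^{-1}(v,w)$, transporting along $\Phi_\mu^{-1}$ gives the reproducing property at vertex pairs; the general formula~\eqref{ea:R_mu in main txt} then drops out by two successive linear interpolations, since for each fixed $v$ the kernel $R_\mu(\cdot,v)$ itself lies in $\mathscr P_\mu \mathcal F$ and is therefore the linear interpolant of its values at $\underline u$ and $\overline u$, and symmetrically in the second argument.

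For part (B), fix $e \in \mathcal E$. Every $f \in \mathscr P_e \mathcal F$ vanishes at the two endpoints of $e$ (by construction of $\mathscr P_e = I - \mathscr P_\mu$ restricted to $e$ together with the continuity built into $\mathcal F$) and vanishes identically off $e$. The map $\Psi_e\colon \mathscr P_e \mathcal F \to H_e$ given by $\Psi_e(f) = f \circ \varphi_e^{-1}$ is then a vector space bijection, and is an isometry since only the edge-$e$ integral contributes to $\|f\|_{\mathcal F}^2$ and $f(u_o) = 0$, giving~\eqref{BrownianB Dirichlet form}. That $R_e$ is the reproducing kernel of $H_e$ is the classical Cameron--Martin identification of the Brownian bridge RKHS: writing the kernel in shifted coordinates as $K(s,t) = s \wedge t - st/\text{len}(e)$, one has $\partial_t K(s,t) = \mathbf{1}_{\{t < s\}} - s/\text{len}(e)$, so for any $g \in H_e$ with $g(0) = g(\text{len}(e)) = 0$ the integration by parts gives $\int_0^{\text{len}(e)} g'(t)\, \partial_t K(s,t)\, dt = g(s)$, as desired.

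For part (C), Lemma~\ref{lemma on direct sum} says $f \mapsto (f_\mu, (f_{e,r})_{e \in \mathcal E})$ is an isometric isomorphism from $\mathcal F$ onto the orthogonal direct sum $\mathscr P_\mu \mathcal F \oplus \bigoplus_{e} \mathscr P_e \mathcal F$, which combined with (A) and (B) gives the stated identification of $\mathcal F$; the identity~\eqref{Full Dirichlet form} then expresses the vanishing of cross terms in this decomposition. Completeness is inherited from each summand together with the finiteness of $\mathcal E$, and infinite-dimensionality from any single $H_e$. Finally, the identity $R_{\mathcal G} = R_\mu + \sum_e R_e$ from~\eqref{ea:cov fun of Z_G} yields the reproducing kernel via the standard fact that the RKHS of an orthogonal direct sum has kernel equal to the sum of summand kernels, which can be checked directly: $\langle f, R_{\mathcal G}(\cdot,u)\rangle_{\mathcal F} = \langle f_\mu, R_\mu(\cdot,u)\rangle_\mu + \sum_e \langle f_{e,r}, R_e(\cdot,u)\rangle_{e,r} = f_\mu(u) + \sum_e f_{e,r}(u) = f(u)$.

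The main obstacle I anticipate is the somewhat fiddly bookkeeping in extending the reproducing property in (A) from vertex pairs to general points of $\mathcal G$: one must verify that the four-term expression in~\eqref{ea:R_mu in main txt} really coincides with $L^{-1}$ in the degenerate vertex cases where $\underline u = \overline u = u$ forces two of the four terms to coalesce, and that the bilinear-interpolation argument is legitimate despite $R_\mu$ not being globally a bilinear function. The only other place requiring care is the proof that elements of $\mathscr P_e \mathcal F$ really vanish at both endpoints of $e$ (so that $\Psi_e$ lands in $H_e$), which follows from the explicit formula for $f_{e,r}$ together with the continuity requirement on $\mathcal F$.
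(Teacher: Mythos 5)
Your proposal is correct and follows essentially the same route as the paper's proof: the vertex-evaluation isometry onto $(\mathbb R^{\mathcal V},\langle\cdot,\cdot\rangle_L)$ with the reproducing property extended from $L^{-1}$ to $R_\mu$ by linear interpolation in each argument, the edge-coordinate isometry onto $H_e$ with the Brownian-bridge kernel, and part (C) via the orthogonal decomposition of Lemma~\ref{lemma on direct sum} plus additivity of kernels. The only (harmless) divergence is in (B), where you verify the bridge kernel by the direct computation $\int_{0}^{\mathrm{len}(e)} g'(t)\,\partial_t K(s,t)\,\mathrm dt=g(s)$ rather than invoking, as the paper does, the general result on reproducing kernels of constrained subspaces of the Cameron--Martin space.
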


 \begin{proof}

     \textit{\eqref{the linear interpolation ismorphic space}}:
     There is a bijective correspondence between $z\in \mathbb R^{\mathcal V}$ and $f_\mu \in  \mathscr P_\mu\mathcal F$ which
     simply corresponds  to interpreting $z$ as the values of $f_\mu$ on the vertices of $\mathcal G$.
Then
     \begin{align*}
         f_\mu(u) &= (1-d(u)) z(\underline u) + d(u) z(\overline u),\quad\forall u\in\mathcal G \\
         z(v) &= f_\mu(v),\quad\forall v\in\mathcal V.
     \end{align*}
     The bijection also preserves inner product because if $w\in\mathbb R^{\mathcal V}$ corresponds to $g_\mu\in \mathscr P_\mu\mathcal F$, then
     \begin{align*}
     \langle z,w \rangle_L
     &= z(u_o)w(u_o)
         \,+\! \frac{1}{2}\sum_{u\sim v}\frac{(z(u)-z(v))(w(u)-w(v))}{d_{G,\mathcal G}(u,v)}\qquad \text{(by \eqref{ea:zTLw}}) \\
     &= f_\mu(u_o)\,g_\mu(u_o) \,+\,  \frac{1}{2}\sum_{u\sim v} \frac{\left(f_\mu(u)-f_\mu(v)\right)\left(g_\mu(u)-g_\mu(v)\right)}{\text{\rm len}(e)} \\
     &= \langle f_\mu,g_\mu \rangle_\mu\qquad\text{(by \eqref{ea:derivativefmu}),}
     \end{align*}
     where the above sums are over adjacent $u,v\in\mathcal V$. This establishes \eqref{nodeEA Dirichlet form} and that $(\mathscr P_\mu \mathcal F, \langle \cdot, \cdot\rangle_\mu)$ is isomorphic to $(\mathbb R^{\mathcal V},\langle\cdot,\cdot\rangle_{L})$.
     It then remains to show that the reproducing kernel of $(\mathbb R^{\mathcal V}, \langle \cdot, \cdot\rangle_L)$, namely $L^{-1}$, is in bijective correspondence with $R_\mu$. Indeed, for each $u\in\mathcal G$,
     \begin{align}
         f_\mu(u) &= [\mathscr P_\mu f_\mu](u)\qquad\text{(since $\mathscr P_\mu^2 = \mathscr P_\mu$)} \nonumber\\
         &= (1-d(u)) f_\mu(\underline u) + d(u) f_\mu(\overline u) \nonumber\\
         &= (1-d(u)) z(\underline u) + d(u) z(\overline u) \nonumber\\
         &= (1-d(u)) \big\langle z, L^{-1}(\cdot, \underline u)\big\rangle_L + d(u) \big\langle z, L^{-1}(\cdot, \overline u)\big\rangle_L \nonumber\\
         &=  \big\langle z, \underbrace{(1-d(u)) L^{-1}(\cdot, \underline u) +  d(u)L^{-1}(\cdot, \overline u)}_{:=R_L(\cdot,u)}\big\rangle_L\label{eq:step1 for Rmu}
     \end{align}
     where  the function $R_L(\cdot,u)$ is a member of $ \mathbb R^{\mathcal V}$.
     By \eqref{ea:R_mu in main txt}, we can simply linear interpolate $R_L(\cdot,u)$ to find the corresponding member in $\mathscr P_\mu\mathcal F$ as follows
     \[
        (1-d(\cdot))R_L(\underline \cdot,u) + d(\cdot)R_L(\overline \cdot,u) 
        = R_\mu(\cdot,u).
    \]
    Therefore, by \eqref{eq:step1 for Rmu},
    \begin{align*}\label{eq: step2 for Rmu}
        f_\mu(u)  
        = \big\langle z, R_L(\cdot,u)\big\rangle_L = \big\langle f_\mu, R_\mu(\cdot,u)\big\rangle_\mu
    \end{align*}
    where the second equality follows by the fact that inner products are preserved under the bijective correspondence. This completes the proof of \eqref{the linear interpolation ismorphic space}.

    \textit{\eqref{residual ismorphic space}}:
     Let $e\in\mathcal E$. Note that $H_e$ is equal to the constrained space $\{ f\in \mathscr H_e: f(\overline e) = 0\}$ where $\mathscr H_e := \big\{f\in C([\,\underline e,\, \overline e\,]): f^\prime \in L^2([\,\underline e,\, \overline e\,]),\,  f(\underline e)= 0 \big\}$ corresponds to the {\it Cameron-Martin} Hilbert space (using inner product $\langle \cdot, \cdot\rangle_{H_e}$) with reproducing kernel $(s-\underline e) \wedge (t-\underline e)$. Therefore, by \cite{saitoh1997integral} page 77,  the subspace $(H_e, \langle \cdot, \cdot \rangle_{H_e})$  is also a Hilbert space with reproducing kernel given by
     \begin{equation}\label{ReJM kernel}
         \tilde R_e(s,t) := (s-\underline e) \wedge (t-\underline e) - \frac{(s-\underline e)(t-\underline e)}{\overline e - \underline e}\qquad \underline e<s,t<\overline e.
     \end{equation}
     Clearly,  $f\in H_e$ and $f_{e,r}\in\mathscr P_e\mathcal F$ are in a bijective linear correspondence by the relation $f_{e,r}(u) = f(\varphi_e(u))I_e(u)$. By \eqref{ea:R_e given in main txt},
     \begin{equation}
         R_e(u,v)=\begin{cases}
         \tilde R_e(\varphi_e(u),\varphi_e(v)) & \text{ if $u,v\in e$},\\
         0 & \text{ otherwise.}
     \end{cases}
 \end{equation}
 Finally,
 for $f_{e,r},g_{e,r}\in \mathscr P_e\mathcal F$ with corresponding $f,g\in H_e$\,, we obtain
 $\big\langle f_{e,r},g_{e,r}\big\rangle_{e,r}=\big\langle f,g\big\rangle_{H_e}$,
 and so
 \[\big\langle f_{e,r},R_e(\cdot,v)\big\rangle_{e,r}=\big\langle f,\tilde R_e(\cdot,\varphi_e(v)\big\rangle_{H_e}=f(\varphi_e(v))=f_{e,r}(v).\]
 Thereby (B) is verified.

 \textit{\eqref{total ismorphic space}}: This follows immediately from Lemma \ref{lemma on direct sum} and \eqref{the linear interpolation ismorphic space}-\eqref{residual ismorphic space}.
 \end{proof}

 \begin{proof}[\small{PROOF OF PROPOSITION~\ref{the RKHS associated with Z_G}}]
     Lemma \ref{ea:repkernel}, establishes that $R_{\mathcal G}$ is the reproducing kernel for $(\mathcal F, \langle\cdot, \cdot\rangle_{\mathcal F})$. Since  $R_{\mathcal G}$ is also the covariance function of $Z_{\mathcal G}$ we have that $(\mathcal F, \langle\cdot, \cdot\rangle_{\mathcal F})$ is {\it the} RKHS associated to $Z_{\mathcal G}$ (see \cite{wahba1990}). To prove \eqref{the RKHS inner product associated with Z_G} we use standard Hilbert space arguments to show
     \begin{equation}\label{ea:sup char of var}
     \text{\rm var}(Z_{\mathcal G}(u) - Z_{\mathcal G}(v)) = \sup_{f\in \mathcal F} \left\{(f(u) - f(v) )^2:  \| f \|_{\mathcal F}\leq 1 \right\}
     \end{equation}
     the left hand side being the definition of $d_{R,\mathcal G}(u,v)$.
     Let $u,v\in \mathcal G$ with $u\neq v$, and $f\in\mathcal F $ with $\|f\|_{\mathcal F}\leq 1$.  Use the reproducing property of $R_{\mathcal G}$ along with the Cauchy-Schwartz inequality to obtain
     \begin{align*}
     \left(f(u)-f(v)\right)^2 &= \bigl\langle f,\,R_{\mathcal G}(\cdot,u)\!-\!R_{\mathcal G}(\cdot,v)\bigr\rangle_{\mathcal F}^2 \\
     &\leq \big\|R_{\mathcal G}(\cdot,u)-R_{\mathcal G}(\cdot,v)\big\|^2_{\mathcal F}\\
     &=R_{\mathcal G}(u,u) + R_{\mathcal G}(v,v) - 2R_{\mathcal G}(u,v) \\
     &=\text{\rm var}(Z_{\mathcal G}(u) - Z_{\mathcal G}(v)).
    \end{align*}
    Note that the function $f_o\in\mathcal F$ defined by 
    \[
    f_o(w) = (R_{\mathcal G}(w,u)-R_{\mathcal G}(w,v)) / \|R_{\mathcal G}(\cdot,u)-R_{\mathcal G}(\cdot,v)\|_{\mathcal F}
    \]
     has norm $\|f_o\|_{\mathcal F}= 1$ and satisfies
    \[
    \left(f_o(u)-f_o(v)\right)^2 = \text{\rm var}(Z_{\mathcal G}(u) - Z_{\mathcal G}(v)).
    \]
    This proves \eqref{ea:sup char of var} and hence \eqref{the RKHS inner product associated with Z_G}.

    To show \eqref{dR to R} notice that the definition of inner product on $\mathcal F$, in \eqref{def of inner product}, implies that  $\big\langle f, \textbf{1}\big\rangle_{\mathcal F} = f(u_o)$ where $\textbf{1}$ denotes the member of $\mathcal F$ which has constant value $1$ over all points on $\mathcal G$. Now the reproducing kernel property of $R_{\mathcal G}$ implies $\big\langle f, \textbf{1}\big\rangle_{\mathcal F} = \big\langle f, R_{\mathcal G}(\cdot, u_o)\big\rangle_{\mathcal F}$ for all $f\in \mathcal F$. Therefore $R_{\mathcal G}(\cdot, u_o) =\textbf{1} $ and hence $R_{\mathcal G}(u, u_o) = 1$ for all $u\in\mathcal G$. Finally notice
    \begin{align*}
    d_{R,\mathcal G}(u,u_o) &= R_{\mathcal G}(u,u) + R_{\mathcal G}(u_o,u_o) - 2R_{\mathcal G}(u,u_o) = R_{\mathcal G}(u,u) - 1
    \end{align*}
    which gives
    \begin{align*}
    d_{R,\mathcal G}(u,v) &= R_{\mathcal G}(u,u) + R_{\mathcal G}(v,v) - 2R_{\mathcal G}(u,v) \\
    &= 2 + d_{R,\mathcal G}(u,u_o) + d_{R,\mathcal G}(v,u_o) - 2R_{\mathcal G}(u,v).
    \end{align*}
    This proves \eqref{dR to R} as was to be shown.
 \end{proof}


\subsection{Proofs of Propositions \ref{classical effective resistance}, \ref{splitting and merging edges}, and \ref{relation btwn dG and dr}}\label{a:2}

We start by verifying Proposition~\ref{splitting and merging edges} as it is used to prove Proposition~\ref{classical effective resistance}.

\begin{proof}[\small{PROOF OF PROPOSITION~\ref{splitting and merging edges}
}]

Since the operation of merging two edges at a degree two vertex $v$ is the inverse of splitting the resulting edge at $v$, it will be sufficient to show that $\mathcal G^\prime$ is isometric to $\mathcal G$ under the resistance metric when $\mathcal G^\prime$ is obtained from $\mathcal G$ by splitting edge $e\in\mathcal E(\mathcal G)$ at $u\in e$.

Let $e_1$ and $e_2$ denote the partial edges formed by splitting $e\in \mathcal E(\mathcal G)$ at $u\in e$ such that $\underline e_1=\underline e$. Since the sets $\mathcal G$ and $\mathcal G^\prime$ are identical, their corresponding spaces of functions $\mathcal F$ and $\mathcal F^\prime$ as given in Definition \ref{ea:diriclet form def} are identical, however, $\mathcal G$ and $\mathcal G^\prime$ induce different inner products on $\mathcal F$, denoted $\langle \cdot, \cdot \rangle_{\mathcal F}$ and $\langle \cdot, \cdot \rangle_{\mathcal F, \text{split}}$, respectively. By Proposition~\ref{the RKHS associated with Z_G}, $d_{R,\mathcal G}$ and $d_{R,\mathcal G^\prime}$ are completely determined by their inner products $\langle f, g \rangle_{\mathcal F, \text{split}}$ and $\langle f, g \rangle_{\mathcal F}$. Therefore, we may suppose that both $\mathcal G$ and $\mathcal G^\prime$ use the same origin $u_o$ in their respective inner products. Then,
 for any $f, g \in \mathcal F$, the difference between the two inner products is 
       \begin{align*}
           &\langle f, g \rangle_{\mathcal F} - \langle f, g \rangle_{\mathcal F, \text{split}}\\
           =\,&
         \int_{\underline e}^{\overline e}\! f_e^{\,\prime} (t) g_e^{\,\prime}(t) \,\mathrm dt
           -\!\int_{\underline e_1}^{\overline e_1}\!
           f_{e_1}^{\,\prime} (t) g_{e_1}^{\,\prime}(t) \,\mathrm dt
           \,-\! \int_{\underline e_2}^{\overline e_2}\! f_{e_2}^{\,\prime} (t) g_{e_2}^{\,\prime}(t) \,\mathrm dt
       \end{align*}
       since the splitting operation on $e\in\mathcal E(\mathcal G)$ at $u\in e$ only affects the term corresponding to $e$ in 
       (\ref{def of inner product}). By Proposition~\ref{the RKHS associated with Z_G}, to show $d_{R,\mathcal G} = d_{R,\mathcal G^\prime}$, it will be sufficient to show  $\langle f, g \rangle_{\mathcal F}=\langle f, g \rangle_{\mathcal F, \text{split}}$ for all  $f, g\in\mathcal F$.

       For any  $f\in \mathcal F$ and $t\in[\underline e,\overline e]$, define
       \[
       f_1(t) =  f_{e}(t)\,I_{[\underline e_1, \overline e_1)}(t),\qquad f_2(t) =  f_{e}(t)\,I_{[\underline e_2, \overline e_2]}(t).
       \]
       Both  $f_1$ and $f_2$ are almost everywhere differentiable and satisfy $f_1^{\,\prime}, f_2^{\,\prime}\in L^2([\underline e, \overline e])$. Moreover, for any $f,g\in \mathcal F$, the fact that $f^{\,\prime}_1(t)g^{\,\prime}_2(t) \overset{a.e.}= 0$ and $f^{\,\prime}_2(t)g^{\,\prime}_1(t)\overset{a.e.}= 0$ implies that
       \begin{align*}
       \int_{\underline e}^{\overline e}\! f_e^{\,\prime} (t) g_e^{\,\prime}(t) \,\mathrm dt
       &= \int_{\underline e}^{\overline e}\! \big[f_1^{\,\prime}(t)+ f_2^{\,\prime}(t)\big]  \big[g_1^{\,\prime}(t)+ g_2^{\,\prime}(t)\big]  \,\mathrm dt \\
    &= \int_{\underline e_1}^{\overline e_1}\! f_1^{\,\prime} (t) g_1^{\,\prime}(t) \,\mathrm dt + \int_{\underline e_2}^{\overline e_2}\! f_2^{\,\prime} (t) g_2^{\,\prime}(t) \,\mathrm dt.
    \end{align*}
    Note that for Lebesgue almost all numbers $t$,
    $f^{\,\prime}_{e_1}(t) = f^{\,\prime}_1(t)$ if $t\in [\underline e_1, \overline e_1]$ and $f^{\,\prime}_{e_2}(t) = f^{\,\prime}_2(t)$ if $t\in [\underline e_2, \overline e_2]$ (and similarly for $g_{e_1}$, $g_{e_2}$). Therefore,
    \begin{align*}
       \int_{\underline e}^{\overline e}\! f_e^{\,\prime} (t) g_e^{\,\prime}(t) \,\mathrm dt
       &= \int_{\underline e_1}^{\overline e_1}\!
           f_{e_1}^{\,\prime} (t) g_{e_1}^{\,\prime}(t) \,\mathrm dt
           \,+\! \int_{\underline e_2}^{\overline e_2}\! f_{e_2}^{\,\prime} (t) g_{e_2}^{\,\prime}(t) \,\mathrm dt
       \end{align*}
       which implies $\langle f, g \rangle_{\mathcal F, \text{split}} = \langle f, g \rangle_{\mathcal F}$ as was to be shown.
\end{proof}

\begin{proof}[\small{PROOF OF PROPOSITION~\ref{classical effective resistance}
}]

    In the literature on resistance networks and metrics \citep[see e.g.][]{Kigami:03,JorgensenPearse:10}, given a conductance function $c$ (i.e., a symmetric function associated to all pairs of adjacent vertices),
    the (effective) resistance distance between $u,v\in \mathcal V(\mathcal G)$
     is defined by
    \begin{equation}\label{ea:usualDEF}
        d_{\text{eff}}(u,v) = \sup_{z\in\mathbb R^{\mathcal V(\mathcal G)}}\Big\{(z(u)-z(v))^2:\text{
        $\frac{1}{2}\sum_{u_1\sim u_2}c(u_1,u_2)(z(u_1)-z(u_2))^2\le 1$}\Big\}
    \end{equation}
     (this is one of several equivalent definitions, cf.\ Theorem 2.3 in \cite{JorgensenPearse:10}).
    To relate $d_{\text{eff}}$ and $d_{R, \mathcal G}$, recall \eqref{the RKHS inner product associated with Z_G} and that we have defined $c$ by \eqref{conductance1}.
    Also, by Lemma \ref{ea:repkernel}, each $f\in\mathcal F$ has an orthogonal decomposition $f = f_\mu + \sum_{e\in \mathcal E(\mathcal G)} f_{e,r}$ where
    \[
        \|f\|^2_{\mathcal F} = \|f_\mu\|_\mu^2   \,+\, \sum_{e\in \mathcal E(\mathcal G)}\, \|f_{e,r}\|^2_{e,r}
    \]
    and $f_{e,r}(u) = f_{e,r}(u) = 0 $ for all $u,v\in\mathcal V(\mathcal G)$.
    Therefore, if $u,v\in\mathcal V(\mathcal G)$,
    the term $(f(u) - f(v))^2$ in \eqref{the RKHS inner product associated with Z_G} simplifies to $(f_\mu(u) - f_\mu(v))^2$ and hence the supremum can be taken over $f\in\mathcal F$ such that $\|f_{e,r}\|^2_{e,r}=0$ for all $e\in \mathcal E(\mathcal G)$. By Lemma \ref{ea:repkernel} \eqref{the linear interpolation ismorphic space}, when $u,v\in\mathcal V(\mathcal G)$, $d_{R,\mathcal G}(u,v)$ is equal to
    \begin{equation*}
        \sup_{f\in \mathscr P_\mu \mathcal F} \Big\{(f_\mu(u) - f_\mu(v) )^2: \text{ $f_\mu(u_o)^2 \,+\, \sum_{e\in \mathcal E(\mathcal G)}\frac{\big([f_\mu]_e(\overline e)-[f_\mu]_e(\underline e)\big)^2}{\text{\rm len}(e)} \leq 1 $}\Big\}.\label{ea:our DEF}
    \end{equation*}
    Since the constant functions are all members of $\mathscr P_\mu \mathcal F$, we can subtract $f_\mu(u_o)$ from each $f_\mu\in \mathscr P_\mu \mathcal F$ and easily see that the supremum above 
    can be taken over all $f\in \mathscr P_\mu \mathcal F$ which satisfy $f_\mu(u_o) = 0$. It is now easily seen that
    \[ d_{\text{eff}}(u,v) = d_{R,\mathcal G}(u,v),\qquad  u,v\in\mathcal V(\mathcal G). \]
    This also establishes that $d_{R,\mathcal G}$ does not depend on the choice of origin and that $d_{R,\mathcal G}$ is a metric on $\mathcal V(\mathcal G)$ \citep[see e.g.][Lemma 2.6]{JorgensenPearse:10},
    and hence by the splitting operation on edges (Proposition~\ref{splitting and merging edges}) $d_{R,\mathcal G}$ is a metric on $\mathcal G$ as well. 
\end{proof}

\begin{proof}[\small{PROOF OF PROPOSITION~\ref{relation btwn dG and dr}}]

Proposition~\ref{classical effective resistance}
and the theory of electrical networks imply 
\begin{equation}\label{ea:dRdG}
    d_{R, \mathcal G}(u,v)\le d_{G, \mathcal G}(u,v),\qquad u,v\in \mathcal V(\mathcal G),
\end{equation}
with equality if and only if $\mathcal G$ is a tree graph \citep[see e.g.][Lemma 4.3]{JorgensenPearse:10}. The fact that $d_{R,\mathcal G}$ and  $d_{G,\mathcal G}$ are invariant to splitting edges (by Proposition~\ref{splitting and merging edges}) implies that \eqref{ea:dRdG} extends to any additional finite collection of edge points. Thereby, \eqref{eq: dr <= dg} of
Proposition~\ref{relation btwn dG and dr} is verified, where the \textit{if and only if} follows since the tree property of $\mathcal G$ is also invariant to edge splitting.

To show \eqref{eq: form of d_R when G is a cycle} suppose $\mathcal G$ is a Euclidean cycle with circumference $\omega$. Let $u,v\in \mathcal G$ be arbitrary and $s \in \mathcal G$ be the polar opposite of the midpoint of the geodesic path connecting $u$ to $v$. By a sequence of edge splits and merges we may construct a new graph $\mathcal G^\prime$, equaling $\mathcal G$ as a point set, but with vertices $\{u, v, s\}$ and edges connecting $u\sim v$, $v\sim s$ and $u\sim s$ with corresponding edge lengths $d_{G,\mathcal G}(u,v)$, $d_{G,\mathcal G}(v,s)$ and $d_{G,\mathcal G}(u,s)$, respectively.  Notice that the $L$ matrix for $\mathcal G^\prime$, constructed via \eqref{def of L in main txt}, has a particularly simple inverse given by
\begin{align*}
L^{-1} &=
\begin{pmatrix}
            c_1 \!+\! c_2   & -c_1        & -c_2 \\
            -c_1      & c_1 \!+\! c_3   & -c_3  \\
            -c_2      & -c_3       &  c_2 \!+\! c_3 \!+\! 1
    \end{pmatrix}^{\!-1} = 
      \frac{1}{b}\begin{pmatrix}
            {b \!+\! c_1\!+\!c_3}   & {b \!+\! c_1}       & b \\
            {b \!+\! c_1}      & {b \!+\! c_1\!+\!c_2}    & b  \\
            b      & b       &  b
        \end{pmatrix}   
\end{align*}
where $c_1:= 1/d_{G,\mathcal G}(u,v)$, $c_2:= 1/d_{G,\mathcal G}(u,s)$, and $c_3:=1/d_{G,\mathcal G}(v,s)$ are the edge conductances of $\mathcal G^\prime$ and 
 $b := c_1 c_2+c_1 c_3+c_2c_3$. Now, since $u,v\in\mathcal V(\mathcal G^\prime)$,
 \begin{align*}
 d_{R,\mathcal G^\prime}(u,v) &= L^{-1}(u,u) + L^{-1}(v,v) - 2L^{-1}(u,v) \\
 &= \frac{b+c_1+c_3}{b} + \frac{b+c_1+c_2}{b} - 2\frac{b+c_1}{b} \\
 &= d_{G,\mathcal G}(u,v) - \frac{d_{G,\mathcal G}(u,v)^2}{\omega}
 \end{align*}
 which, along with the fact that $d_{R,\mathcal G}(u,v) =  d_{R,\mathcal G^\prime}(u,v)$ by Proposition~\ref{splitting and merging edges},  completes the proof of \eqref{eq: form of d_R when G is a cycle}.
\end{proof}

\bibliography{refs}

\end{document}